\title[The mean curvature at the first singular time  ]{The mean curvature at the first singular time of the mean curvature flow}
\author{Nam Q.  Le}
\address{Department of
Mathematics, Columbia University, New York,
 USA}
\email{namle@math.columbia.edu}
\author{Natasa Sesum$^{*}$}
\address{Department of Mathematics, 
University of Pennsylvania, Philadelphia, PA,
USA}
\email{natasas@math.upenn.edu}
\thanks{$*:$ Partially supported
by NSF grant 0604657}
\newcommand{\review}[2][\right]{\relax
\ifx#1\right\relax \left.\fi#2#1\rvert}
\let\abs=\envert
 \newtheorem{definition}{Definition}[section]
\newtheorem{theorem}{Theorem}[section]
\newtheorem{remark}{Remark}[section]
\newtheorem{lemma}{Lemma}[section]
\newtheorem{cor}{Corollary}[section]
\newtheorem{claim}{Claim}[section]
\newtheorem{conj}{Conjecture}[section]
\newcommand{\bef}{\begin{flushright}}
\newcommand{\eef}{\end{flushright}}
\newcommand{\eval}[2][\right]{\relax
\ifx#1\right\relax \left.\fi#2#1\rvert}
\let\abs=\envert
\numberwithin{equation}{section}
\let\norm=\enVert
\newcommand\e{\varepsilon}
\newcommand{\rem}{\mathrm{Rm}}
\newcommand{\h}{\hspace*{.24in}}
\newcommand{\vol}{\mathrm{vol}}
\newcommand{\ric}{\mathrm{Ric}}
\def\h{\hspace*{.24in}}
\def\beq{\begin{eqnarray*}}
\def\eeq{\end{eqnarray*}}
\def\RR{\mbox{$I\hspace{-.06in}R$}}
\begin{document}
\date{January 15, 2010}
\maketitle
\author
\pagenumbering{arabic}
\begin{abstract}
Consider a family of smooth immersions $F(\cdot,t): M^n\to \mathbb{R}^{n+1}$ of closed hypersurfaces in $\mathbb{R}^{n+1}$ moving 
by the mean curvature flow $\frac{\partial F(p,t)}{\partial t} = -H(p,t)\cdot \nu(p,t)$, for $t\in [0,T)$.  We prove that the mean curvature blows up 
at the first singular time $T$ if all singularities are of type I.  In the case $n = 2$, regardless of the type of 
a possibly forming singularity, we show that at the first singular time the mean curvature necessarily blows up 
provided that either the Multiplicity One Conjecture holds or the Gaussian density is less than two. We 
also establish and give several applications of 
a local regularity theorem which is a parabolic analogue of Choi-Schoen
estimate for minimal submanifolds. 
\end{abstract}
\noindent

\section{Introduction}
Let $M^{n}$ be a compact $n$-dimensional hypersurface without boundary, and let $F_{0}: M^{n}\rightarrow \RR^{n+ 1}$ be a smooth immersion of $M^{n}$ into $\mathbb{R}^{n+1}$. Consider a smooth one-parameter family of immersions
\begin{equation*}
 F(\cdot, t): M^{n}\rightarrow \RR^{n +1}
\end{equation*}
satisfying
$
 F(\cdot, 0) = F_{0}(\cdot)$ 
and 
\begin{equation}
 \frac{\partial F(p, t)}{\partial t} = -H(p, t)\nu(p,t), \,\,\,~\forall (p, t)\in M\times [0, T).
\label{MCF1}
\end{equation}
Here $H(p, t)$ and $\nu(p, t)$ denote the mean curvature and a choice of unit normal for the hypersurface $M_{t} = F(M^{n},t)$ at $F(p, t)$, respectively.
We will sometimes also write $x(p, t) = F(p, t)$ and refer to (\ref{MCF1}) as to the mean curvature flow equation. Furthermore, for any 
compact $n$-dimensional hypersurface $M^{n}$ which is smoothly embedded in $\RR^{n+1}$ by $F: M^{n}\rightarrow \RR^{n+1}$, 
let us denote by $g = (g_{ij})$ the induced metric, $A = (h_{ij})$ the second fundamental form,  $d\mu =\sqrt{\text{det}~(g_{ij})}~dx$ the volume form,
$\nabla$ the induced Levi-Civita connection.
Then the mean curvature of $M^{n}$ is given by $H = g^{ij}h_{ij}.$\\
\h Without any special assumptions on $M_0$, the mean curvature flow (\ref{MCF1}) will in general develop singularities  in finite time, characterized by a blow up of the second fundamental form $A(\cdot,t)$.

\begin{theorem}[Huisken \cite{Huisken84}]
Suppose $T < \infty$ is the first singularity time  for a compact mean curvature flow. Then $\sup_{M_t}|A|(\cdot,t) \to \infty$ as $t\to T$.
\label{Aunbound}
\end{theorem}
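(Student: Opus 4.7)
The plan is to argue by contradiction: assume there is a constant $C$ with $|A|(\cdot,t) \le C$ on $M\times [0,T)$, and then show that the flow extends smoothly beyond $T$, which contradicts the definition of $T$ as the first singular time.

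First I would exploit the evolution equation $\partial_t g_{ij} = -2 H h_{ij}$, which under the standing bound $|A|\le C$ gives $|\partial_t g_{ij}| \le C'$, and therefore the metrics $g(t)$ on $M$ are uniformly equivalent to $g(0)$ on $[0,T)$, with each $g_{ij}(\cdot,t)$ converging uniformly as $t\to T$ to a continuous limit metric $g_{ij}(\cdot,T)$ that is bounded above and bounded away from zero. Similarly, $|\partial_t F| = |H| \le \sqrt{n}\,C$ so $F(\cdot,t)$ converges uniformly to some continuous map $F(\cdot,T)$.

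Next I would upgrade this to smooth convergence by proving, inductively on $m$, that $\sup_{M\times[0,T)} |\nabla^m A| \le C_m$. The base case $m=0$ is the standing hypothesis. For the inductive step I would use Huisken's evolution equation
\begin{equation*}
\tfrac{\partial}{\partial t} |\nabla^m A|^2 \le \Delta |\nabla^m A|^2 - 2|\nabla^{m+1}A|^2 + C(n,m)\sum_{i+j+k=m} |\nabla^i A|\,|\nabla^j A|\,|\nabla^k A|\,|\nabla^m A|,
\end{equation*}
which under the inductive hypothesis yields a differential inequality of the form $(\partial_t - \Delta)|\nabla^m A|^2 \le C_m' + C_m''|\nabla^m A|^2$ for a suitable auxiliary quantity. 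Applying the maximum principle (or, to handle the $|\nabla^{m+1}A|^2$ term cleanly, a Bernstein-type argument on $|\nabla^{m-1}A|^2 + \alpha|\nabla^m A|^2$) then gives the uniform bound on $[0,T)$. Since the metrics are uniformly equivalent, these bounds transfer to bounds on the Euclidean derivatives of $F$ in fixed local charts.

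With uniform $C^k$ bounds for every $k$, the Arzel\`a--Ascoli theorem yields $F(\cdot,t)\to F(\cdot,T)$ in $C^\infty(M)$ as $t\to T$, so $F(\cdot,T)$ is a smooth immersion of a closed hypersurface. Applying the short-time existence theorem for the mean curvature flow with initial datum $F(\cdot,T)$ and gluing produces a smooth solution on $[0,T+\delta)$ for some $\delta>0$, contradicting the maximality of $T$. The main technical obstacle is the inductive derivative estimate: the right-hand side of the evolution equation for $|\nabla^m A|^2$ contains the term $|\nabla^m A|^2$ itself, so one has to be careful to control the cubic-in-derivative terms by the inductive hypothesis and absorb the top-order term using the negative $-2|\nabla^{m+1}A|^2$ coming from the Bochner-type identity; every other step is essentially forced once these a priori bounds are in hand.
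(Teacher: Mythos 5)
The paper does not supply a proof of this statement; it is quoted as a known theorem of Huisken \cite{Huisken84} (Theorem 8.1 there), and your argument is a faithful reconstruction of Huisken's original proof: bound all covariant derivatives $\nabla^m A$ by induction using the evolution inequality and the parabolic maximum principle on the compact $M$ over the finite interval $[0,T)$, conclude smooth convergence of the immersions as $t\nearrow T$, and then contradict maximality of $T$ by short-time existence. One small remark: the top-order term $-2|\nabla^{m+1}A|^2$ already has a favorable sign and may simply be dropped, so there is nothing there to ``handle'' by a Bernstein-type combination; the straightforward maximum principle applied to $|\nabla^m A|^2$ suffices, the inductive hypothesis controlling the sub-critical cubic terms and the standing bound $|A|\le C$ absorbing the $|A|^2|\nabla^m A|^2$ piece into a linear-in-$|\nabla^m A|^2$ term with a constant coefficient, yielding at worst exponential growth on the finite interval $[0,T)$.
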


By the work of Huisken and Sinestrari \cite{HS} the blow up of $H$ near a singularity is known for mean convex hypersurfaces. They show that 
when $H \ge 0$ one has a pinching curvature estimate stating that $|A|^2 \le C_1 H^2 + C_2$, for uniform constants $C_1, C_2$.  In \cite{Sm}  a similar pinching estimate has been proven for star shaped hypersurfaces. The present article 
establishes the blow up of the mean curvature  in the case of  type I singularities.  
\begin{definition}
We say that the mean curvature flow (\ref{MCF1}) develops a singularity of type I at $T < \infty$ if the blow-up rate of the curvature satisfies an upper bound of the form
\begin{equation}
 \mathrm{max}_{M_{t}} \abs{A}^2(\cdot, t) \leq \frac{C_{0}}{T-t}, ~ 0\leq t<T.
\label{typeI}
\end{equation}
\end{definition}
In this paper, we prove the following
\begin{theorem}
Assume (\ref{typeI}) for the mean curvature flow (\ref{MCF1}). If
\begin{equation}
 \mathrm{max}_{M_{t}} \abs{H}^2(\cdot, t) \leq C_{0}
\label{Hbound}
\end{equation}
then the flow can be extended past time $T$.
\label{MCbound}
\end{theorem}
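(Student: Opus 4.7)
The plan is to argue by contradiction: suppose $T$ is a genuine singular time, so by Theorem \ref{Aunbound} there is a spacetime point $(x_{0},T)$ at which $|A|$ concentrates. Pick sequences $p_{k}\in M^{n}$ and $t_{k}\nearrow T$ with $|A|(p_{k},t_{k})=\max_{M_{t_{k}}}|A|\to \infty$ and $F(p_{k},t_{k})\to x_{0}$. I would then perform Huisken's type I parabolic rescaling centered at $(x_{0},T)$:
\[
\tilde{F}(\cdot,s)=\frac{F(\cdot,t)-x_{0}}{\sqrt{2(T-t)}},\qquad s=-\tfrac{1}{2}\log(T-t),
\]
so that the rescaled hypersurfaces $\tilde{M}_{s}$ solve the rescaled mean curvature flow $\partial_{s}\tilde{F}=-\tilde{H}\tilde{\nu}+\tilde{F}$ and satisfy the uniform bound $|\tilde{A}|^{2}\leq C_{0}$ on all of $\tilde{M}_{s}$, thanks to the type I hypothesis (\ref{typeI}).

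Next, using the uniform second fundamental form bound together with higher derivative estimates for $A$ (Ecker--Huisken style), I would pass to a subsequential limit $\tilde{M}_{\infty}$ of $\tilde{M}_{s_{k}}$ in $C^{\infty}_{\mathrm{loc}}$. By Huisken's monotonicity formula, $\tilde{M}_{\infty}$ is a smooth nontrivial self-shrinker, i.e.\ satisfies $\tilde{H}=\langle \tilde{F},\tilde{\nu}\rangle$. Now comes the key input from the hypothesis (\ref{Hbound}): since mean curvature rescales as $\tilde{H}=\sqrt{2(T-t)}\,H$, the assumed bound $|H|\leq C_{0}$ forces
\[
|\tilde{H}(\cdot,s)|\leq C_{0}\sqrt{2(T-t)}\longrightarrow 0\qquad (s\to\infty).
\]
Hence the limit shrinker satisfies $\tilde{H}\equiv 0$, and the shrinker equation then yields $\langle \tilde{F},\tilde{\nu}\rangle\equiv 0$, which forces $\tilde{M}_{\infty}$ to be a hyperplane through the origin. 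Consequently the Gaussian density $\Theta(x_{0},T)$ equals $1$.

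Finally, I would invoke White's (Brakke-type) local regularity theorem: since the Gaussian density at $(x_{0},T)$ is $1$, and by upper semi-continuity the density stays below $1+\varepsilon_{0}$ in a spacetime neighborhood of $(x_{0},T)$, the flow must be smooth in that neighborhood, contradicting the choice of $(x_{0},T)$ as a curvature concentration point. This contradiction shows $\sup_{M_{t}}|A|$ stays bounded as $t\to T$, and the standard extension theorem of Huisken produces a smooth extension of the flow past $T$.

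The main obstacle I anticipate is justifying the smooth convergence to the shrinker and, more subtly, guaranteeing that $\tilde{M}_{\infty}$ is nontrivial (i.e.\ that $x_{0}$ is actually achieved in the limit). The nontriviality is usually handled by showing that the rescaled flow passes through a fixed compact neighborhood of the origin with curvature pinched away from $0$, using the choice of $p_{k}$ and the monotonicity formula to prevent the limit from being empty. A secondary delicate point is verifying the hypotheses of White's regularity theorem for the original (smooth) flow without having to appeal to a Brakke flow formulation; this can be avoided by appealing instead directly to an $\varepsilon$-regularity statement phrased for smooth mean curvature flows with small Gaussian density.
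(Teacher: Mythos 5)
Your proposal follows essentially the same route as the paper: rescale parabolically at a singular spacetime point, use the type I bound to extract a smooth self-shrinking limit via Huisken's monotonicity and Ecker--Huisken interior estimates, use the $H$-bound to force the shrinker to have $H\equiv 0$ (hence a plane), deduce Gaussian density $1$, and close with White's local regularity theorem. The only cosmetic difference is that you use Huisken's continuous normalized flow while the paper dilates by a discrete sequence $\lambda_{i}\to\infty$; these are interchangeable.

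Two remarks on the obstacles you flag. (i) Nontriviality of the blowup is resolved cleanly, not via the points $p_{k}$, but via Ecker's distance estimate $\mathrm{dist}(M_{t},x_{0})\leq\sqrt{2n(T-t)}$ (Corollary 3.6 in \cite{Ecker}); after rescaling this guarantees $\mathrm{dist}(0,\tilde M_{s})\leq\sqrt{-2ns}$, so the limit meets every ball around the origin. Tracking $p_{k}$ is less reliable: the rescaling is centered at $x_{0}$, and even though $F(p_{k},t_{k})\to x_{0}$, the rescaled images $\lambda_{k}\bigl(F(p_{k},t_{k})-x_{0}\bigr)$ need not remain bounded, so these points may escape to spatial infinity in the blowup. (ii) Invoking White's theorem directly is fine here; it applies to smooth mean curvature flows (as a special case of integral Brakke flows), and the paper does exactly this without any detour through a separate $\varepsilon$-regularity statement.
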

In fact, the above theorem is a consequence of the following result.
\begin{theorem}
Assume (\ref{typeI}) for the mean curvature flow (\ref{MCF1}). If for some $\alpha \ge n + 2$
\begin{equation}
 \norm{H}_{L^{\alpha}(M\times [0, T))} \leq C_{0}
\label{intHbound}
\end{equation}
then the flow can be extended past time $T$.
\label{intMCbound}
\end{theorem}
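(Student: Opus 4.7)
I argue by contradiction: suppose the flow cannot be extended past $T$. Combining Theorem \ref{Aunbound} with the maximum principle applied to $|F|^2 + 2nt$ (which keeps the $M_t$ in a fixed compact region of $\mathbb{R}^{n+1}$), there is a sequence $(p_j,t_j)$ with $t_j \to T$, $|A|(p_j,t_j) \to \infty$ and $F(p_j,t_j) \to x_0$ for some $x_0 \in \mathbb{R}^{n+1}$. By Huisken's monotonicity formula together with the Brakke--White $\varepsilon$-regularity theorem, the Gaussian density $\Theta(x_0,T)$ is strictly greater than $1$, for otherwise the flow would be smooth in a backward parabolic neighborhood of $(x_0,T)$, contradicting the blow-up of $|A|$.

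I then perform the parabolic rescaling around $(x_0,T)$:
\[
\tilde F(p,s) := e^{s/2}\bigl(F(p,T-e^{-s}) - x_0\bigr), \qquad s \ge -\log T.
\]
Under the type I hypothesis (\ref{typeI}) the rescaled second fundamental form satisfies $|\tilde A|^2 = (T-t)|A|^2 \le C_0$ uniformly in $s$, and Shi-type estimates control all its covariant derivatives. An Arzelà--Ascoli argument combined with Huisken's monotonicity formula then yields a subsequence $s_j \to \infty$ along which $\tilde M_{s_j}$ converges smoothly on compact sets to a complete self-shrinker $\Sigma_\infty$ with density $\Theta(\Sigma_\infty) = \Theta(x_0,T) > 1$. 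In particular $\Sigma_\infty$ is not a hyperplane, so $H_\infty \not\equiv 0$, and one can fix $R, \varepsilon > 0$ such that $\int_{\Sigma_\infty \cap B_R} |H_\infty|^\alpha\,d\mu > 2\varepsilon$. Since self-shrinkers are stationary under the rescaled flow, smooth convergence propagates to each time window $[s_j - 1/2,\, s_j + 1/2]$, giving
\[
\int_{s_j - 1/2}^{s_j + 1/2}\!\int_{\tilde M_s \cap B_R} |\tilde H|^\alpha\,d\tilde\mu_s\,ds \;\ge\; \varepsilon
\]
for all $j$ large.

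The contradiction now comes from a scaling computation. With $t = T - e^{-s}$ one has $H = e^{s/2}\tilde H$, $d\mu_t = e^{-ns/2}\,d\tilde\mu_s$ and $dt = e^{-s}\,ds$, so
\[
\int_{t_j^-}^{t_j^+}\!\int_{M_t} |H|^\alpha\,d\mu_t\,dt \;=\; \int_{s_j - 1/2}^{s_j + 1/2}\!\int_{\tilde M_s} e^{(\alpha - n - 2)s/2}\,|\tilde H|^\alpha\,d\tilde\mu_s\,ds,
\]
where $t_j^\pm := T - e^{-(s_j \pm 1/2)}$. Since $\alpha \ge n+2$ and $s_j \to \infty$, the exponential factor is $\ge 1$, so each original-scale integral exceeds $\varepsilon$. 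Passing to a further subsequence with $s_{j+1} > s_j + 1$ makes the intervals $[t_j^-, t_j^+]$ pairwise disjoint, and summing over $j$ yields $\|H\|_{L^\alpha(M \times [0,T))}^\alpha = \infty$, contradicting (\ref{intHbound}).

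The delicate step is upgrading the measure-theoretic tangent-flow limit provided by the monotonicity formula to a \emph{smooth, non-planar} self-shrinker: this needs the type I bound together with Shi-type derivative estimates to ensure smooth subsequential convergence, and Brakke--White $\varepsilon$-regularity to rule out the multiplicity-one planar limit. The threshold $\alpha = n + 2$ in (\ref{intHbound}) is precisely the scale-invariant exponent at which the rescaled and original $L^\alpha$ integrals coincide, so the argument is sharp at this value.
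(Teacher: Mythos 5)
Your proof takes a genuinely different (essentially contrapositive) route from the paper's. The paper fixes any point $y_0$ reached at time $T$, uses the $L^\alpha$ bound to force $H \equiv 0$ on the smooth type-I blowup limit --- treating $\alpha > n+2$ via a vanishing scaling factor $\lambda_i^{-(\alpha-(n+2))}$, and $\alpha = n+2$ via the self-similar identity $\int_{M_s}|H|^{n+2}\,d\mu_s = a/(-s)$ which forces $a=0$ --- then concludes the limit is a hyperplane of Gaussian density $1$ and invokes White's regularity theorem. You instead assume a singularity, extract a singular point $(x_0,T)$ with Gaussian density $> 1$, deduce the blowup $\Sigma_\infty$ is a non-planar self-shrinker with $H_\infty \not\equiv 0$, and use the Jacobian relation $|H|^\alpha\,d\mu\,dt = e^{(\alpha-n-2)s/2}|\tilde H|^\alpha\,d\tilde\mu_s\,ds$ together with disjoint time windows to show $\|H\|_{L^\alpha}^\alpha = \infty$. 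This computation cleanly unifies the two cases $\alpha = n+2$ and $\alpha > n+2$ that the paper handles by separate lemmas, which is a real advantage of your formulation.

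The delicate step is the inference \emph{density $>1 \Rightarrow \Sigma_\infty$ is not a hyperplane $\Rightarrow H_\infty \not\equiv 0$}. A plane taken with multiplicity $k\ge 2$ also has Gaussian density $k>1$ yet carries $H\equiv 0$, so this inference requires knowing the type-I blowup converges with multiplicity one. The paper's proof leans on the same multiplicity-one fact (its closing step $\lim_i \int_{M^{\lambda_i}_{s_0}}\rho_{0,0}\,d\mu^{\lambda_i}_{s_0} = 1$ is precisely a multiplicity-one statement), but the order of operations differs in a meaningful way: the paper derives $H\equiv 0$ on $\Sigma_\infty$ directly from the $L^\alpha$ bound, independently of any density consideration, and multiplicity one is invoked only afterward to identify the density as exactly $1$. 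In your version, multiplicity one is what produces the nonvanishing $H_\infty$ that the entire contradiction rests on, so it is the load-bearing hypothesis. You should therefore either justify multiplicity one explicitly (from embeddedness plus the uniform $C^\infty_{\mathrm{loc}}$ graphical convergence that the type-I bound and interior estimates provide), or restructure along the paper's lines so that $H_\infty\equiv 0$ is obtained from the $L^\alpha$ bound before any appeal to density.
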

The proofs of Theorems \ref{MCbound} and \ref{intMCbound} are based on blow-up arguments using Huisken's monotonicity formula, the classification of self-shrinkers
and White's local regularity theorem for mean curvature flow. 
\begin{remark}
 To some extent, the condition $\alpha \geq n +2$ appearing in Theorem \ref{intMCbound} is optimal as illustrated by the mean curvature flow of the
standard sphere $S^{n}$.
\end{remark}
Our Theorems \ref{MCbound} and \ref{intMCbound} left open the question on the possible blow up of the mean curvature at the first
singular time $T$ for mean curvature flows with singularities other than Type $I$. This seems to be a difficult question. However,
assuming the validity of Multiplicity One Conjecture (see page 7 of \cite{I1} and the precise statement
in Conjecture \ref{MultOne} of the present article), we prove the following
\begin{theorem}
 Let $M^{2}$ be a compact, smooth and embedded 2-dimensional manifold in $\RR^{3}$. If
\begin{equation}
 \mathrm{max}_{M_{t}} \abs{H}^2(\cdot, t) \leq C_{0}
\label{Hbound3d}
\end{equation}
then the flow can be extended past time $T$.
\label{3dcase}
\end{theorem}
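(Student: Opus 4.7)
The plan is to argue by contradiction, combining Huisken's monotonicity formula with White's local regularity theorem. Assume $T$ is the first singular time and that $\max_{M_t}|H|^{2}\leq C_{0}$ on $[0,T)$. By Theorem \ref{Aunbound} we can pick a sequence $(p_k,t_k)$ with $\lambda_k := |A|(p_k,t_k)=\max_{M\times[0,t_k]}|A|\to\infty$ and $t_k\to T$. After passing to a subsequence, $y_0:=\lim_{k\to\infty} F(p_k,t_k)\in\RR^{3}$ is a singular spacetime point of the flow at time $T$.

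The first step is to produce a tangent flow at $(y_0,T)$. Consider the parabolically rescaled flows
\[
\tilde{M}^k_s := \frac{1}{\sqrt{T-t_k}}\bigl(M_{T+s(T-t_k)}-y_0\bigr),\qquad s\in\bigl[-T/(T-t_k),\,0\bigr).
\]
By Huisken's monotonicity formula and Ilmanen's compactness theorem for Brakke flows, a subsequence of $\tilde{M}^k_s$ converges to a tangent Brakke flow $\tilde{M}^\infty_s$ which is backwardly self-similar; equivalently $\tilde{M}^\infty_{-1}=:\Sigma$ is a (possibly singular, possibly non-unit multiplicity) self-shrinker satisfying $H+\tfrac12\langle x,\nu\rangle=0$ in the weak sense.

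The second step is to identify $\Sigma$ using the hypothesis on $H$. Under this rescaling the mean curvature scales as $\tilde H = \sqrt{T-t_k}\,H$, which tends to zero uniformly in view of \eqref{Hbound3d}. Hence $\Sigma$ is a minimal self-shrinker, so $\langle x,\nu\rangle\equiv 0$ on $\Sigma$: that is, $\Sigma$ is a minimal cone with vertex at the origin in $\RR^{3}$. Under the Multiplicity One Conjecture, $\Sigma$ is smooth, embedded and carries multiplicity one, so it must be a plane through the origin. Under the alternative hypothesis $\Theta(y_0,T)<2$, the density of the tangent flow forbids any plane appearing with multiplicity at least two, so again $\Sigma$ is a single multiplicity-one plane through the origin.

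Finally, apply White's local regularity theorem: since the tangent flow at $(y_0,T)$ is a multiplicity-one plane, i.e.~$\Theta(y_0,T)=1$, the mean curvature flow is smooth in some spacetime neighborhood of $(y_0,T)$. This contradicts the choice of $(p_k,t_k)$ as a blow-up sequence converging to $(y_0,T)$. The main obstacle is the absence of a Type I bound, which prevents a direct smooth blow-up: the convergence in the first step must be taken in the Brakke/varifold sense, and one has to verify carefully that $\tilde H\to 0$ passes to the weak limit so that $H\equiv 0$ on $\Sigma$. Once this is done, the classification of smooth embedded minimal cones in $\RR^{3}$ as planes, combined with the multiplicity control supplied by either of the two hypotheses, closes the argument.
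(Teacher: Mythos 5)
Your argument is essentially the same as the paper's: parabolically rescale at a singular point, pass to a Brakke tangent flow, use the bound on $H$ to show the limit is a minimal self-shrinker, conclude from $\langle x,\nu\rangle=0$ that it is a cone, invoke smoothness and the Multiplicity One Conjecture to get a multiplicity-one plane, and finish with White's local regularity theorem. The one step you flag as delicate --- passing $\tilde H\to 0$ to the weak limit so that $H\equiv 0$ on $\Sigma$ --- is precisely where the paper's proof differs in emphasis: they establish $\vec H_s=0$ on the tangent flow \emph{before} invoking any smoothness, via the lower semicontinuity of $\int\abs{\vec H}\,d\mu$ under Radon-measure convergence together with $\abs{\vec H^\lambda_s}=\abs{\vec H_t}/\lambda\le C_0/\lambda\to 0$, and only then use Ilmanen's smoothness of surface tangent flows and the Conjecture to upgrade the weak statement. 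If you instead lean on the Multiplicity One Conjecture for smooth $C^\infty_{\mathrm{loc}}$ convergence, the passage of $\tilde H\to 0$ to the limit is immediate and the gap you flagged closes, so your version is also correct, just with a slightly different logical order. Two cosmetic differences: the paper shows directly that every $y_0$ reached at time $T$ is a regular point rather than arguing by contradiction along a curvature-blow-up sequence, and your aside about the density $<2$ alternative really belongs to the separate Theorem \ref{3dreg}, not to the theorem at hand.
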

The next result is independent of the Multiplicity One Conjecture. It is in some sense a refinement of  White's local regularity theorem \cite{White}.  White gives uniform curvature bounds in regions of spacetime where the Gaussian  density is close to one. We prove the following.
\begin{theorem}
 Let $M^{2}$ be a compact, smooth and embedded 2-dimensional manifold in $\RR^{3}$. Suppose that (\ref{Hbound3d}) holds. 
Let $y_{0}\in \RR^{3}$ be a point 
reached by the mean curvature 
flow (\ref{MCF1}) at time $T$. If 
\begin{equation}
 \lim_{t\nearrow T} \int \rho_{y_{0}, T} d\mu_{t}:= \lim_{t\nearrow T} \int \frac{1}{[4\pi(T-t)]^{n/2}} \mathrm{exp}(-\frac{\abs{y-y_{0}}^2}{4(T-t)})
d\mu_{t} < 2.
\label{below2}
\end{equation}
 then $(y_{0}, T)$ is a regular point of the mean curvature flow (\ref{MCF1}).
\label{3dreg}
\end{theorem}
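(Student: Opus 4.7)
The plan is to argue by contradiction, performing a parabolic blow-up about $(y_{0},T)$ and using Huisken's monotonicity formula together with the two-dimensionality and the density hypothesis (\ref{below2}) to identify the tangent flow. This is the same blow-up philosophy as in Theorems \ref{MCbound} and \ref{intMCbound}, but the density bound $<2$ replaces the type I assumption, and the low dimension $n=2$ is what makes the classification of the limit object feasible.

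Suppose $(y_{0},T)$ is singular, and for a sequence $\lambda_{i}\to\infty$ set
$F^{i}(p,s)=\lambda_{i}\bigl(F(p,\,T+\lambda_{i}^{-2}s)-y_{0}\bigr)$,
which is a mean curvature flow of smoothly embedded surfaces on $(-\lambda_{i}^{2}T,0)$. Hypothesis (\ref{Hbound3d}) gives the key estimate $\sup|H^{i}|\le \sqrt{C_{0}}/\lambda_{i}\to 0$, while Huisken's monotonicity formula together with (\ref{below2}) forces the Gaussian density of each $F^{i}$ at the spacetime origin to remain strictly less than $2$. Brakke's compactness theorem, combined with the uniform area bound supplied by the monotonicity formula, then produces a subsequential limit $\mathcal{M}^{\infty}$, an integral Brakke flow on $\RR^{3}\times(-\infty,0)$. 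The uniform vanishing of $|H^{i}|$ implies that $\mathcal{M}^{\infty}$ has zero generalized mean curvature, so it is time-independent and given by a stationary integral $2$-varifold $V^{\infty}$ in $\RR^{3}$. Applying Huisken's monotonicity to the limit yields backward self-similarity at $(0,0)$, and combined with stationarity this exhibits $V^{\infty}$ as a stationary integral $2$-cone with vertex at the origin whose Gaussian density at the vertex is $<2$.

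The decisive step is to show that $V^{\infty}$ is a plane through the origin with multiplicity one. Intersection with $S^{2}$ presents $V^{\infty}$ as the cone over a stationary integral geodesic $1$-network $\gamma\subset S^{2}$, and the Gaussian density at the origin equals $\mathcal{H}^{1}(\gamma)/(2\pi)<2$. At any point of the support of $V^{\infty}$ with density close to $1$, Brakke's local regularity theorem (trivially applicable since $H\equiv 0$ on $V^{\infty}$) yields smoothness; Federer's classification of codimension-one minimal cones in low dimensions, according to which a smooth minimal $2$-cone in $\RR^{3}$ is a hyperplane, then forces the smooth part to be planar. The density bound $<2$ and integrality promote this to multiplicity one, so $\gamma$ must be a single great circle and $V^{\infty}$ a multiplicity-one plane through the origin. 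Once this is known, the Gaussian densities of the original flow are close to $1$ throughout some parabolic neighborhood of $(y_{0},T)$, so White's local regularity theorem \cite{White} produces uniform bounds on $|A|$ in a smaller neighborhood, contradicting the assumption that $(y_{0},T)$ is singular.

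The main technical obstacle is the classification step: excluding non-planar stationary integral $2$-cones in $\RR^{3}$ whose Gaussian density lies in $[1,2)$, most notably the triple-junction (``Y'') cone of density $3/2$. These cannot be ruled out by Allard's or Brakke's regularity theorem alone, since the density is bounded away from $1$ along the junction set. The argument must therefore exploit the crucial extra information that $V^{\infty}$ arises as a Brakke limit of smoothly embedded mean curvature flows with uniformly vanishing mean curvature, which, together with low-dimensional regularity theory for stationary integral varifolds, should prevent such singular cones from appearing in the limit.
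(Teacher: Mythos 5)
Your outline matches the paper's blow-up strategy, and you have correctly isolated the crux: showing that the limiting tangent object is a multiplicity-one plane, and in particular excluding non-planar stationary integral $2$-cones with density in $[1,2)$, the triple-junction (Y-)cone being the prototype. However, your final paragraph acknowledges this as an obstacle and offers only a heuristic (that the limit arises from smooth embedded flows, plus ``low-dimensional regularity theory'') without an actual mechanism to rule such cones out. This is a genuine gap: a triple-junction cone \emph{is} a stationary integral $2$-varifold, and Allard/Brakke regularity do not apply along the junction line where the density is bounded away from $1$, so the argument as written does not close.

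The paper resolves exactly this point by invoking Ilmanen's regularity theorem for tangent flows of embedded surfaces in $\RR^{3}$ (\cite{I1}), which asserts that the blow-up limit $X_s$ is a \emph{smooth} self-shrinker; this rules out the Y-cone outright. Once $X_s$ is known to be smooth, the $L^\infty$ bound on $H$ from (\ref{Hbound3d}) together with lower semicontinuity forces $\overrightarrow{H}_s = 0$, and the Colding--Minicozzi classification of self-shrinkers with $H = 0$ identifies $X_s$ as a hyperplane. The multiplicity is then handled by Sch\"{a}tzle's Constancy Theorem (Theorem \ref{Schatzle}): since $\nu_s$ is an integral varifold with $\overrightarrow{H} \in L^1_{\mathrm{loc}}$ supported on the connected $C^1$ manifold $X_s$, the multiplicity function $\theta_s$ is a constant integer; the density bound (\ref{below2}) plus Proposition 2.10 of \cite{White} gives $1 \le \theta_s < 2$, hence $\theta_s \equiv 1$, and White's local regularity theorem finishes. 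You should also make the constancy argument explicit rather than relying on ``the density bound $<2$ and integrality promote this to multiplicity one,'' since a priori the multiplicity could be a non-constant integer-valued function on the plane; the Constancy Theorem is what eliminates that possibility.
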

\begin{remark}
Our theorem says that for mean curvature flow of surfaces with  Gaussian density $\lim_{t\nearrow T} \int \rho_{y_{0}, T} d\mu_{t}$ below 2, for every $y_0$  reached by the flow at time $T$, the mean
curvature must blow up at the first singular time. 
 In \cite{St}, Stone calculated the Gaussian density on spheres and cylinders. On spheres,
the density is $4/e\approx 1. 47$ and on cylinders it is $\sqrt{2\pi/e}\approx 1. 52.$
\end{remark}

We also give the following characterization of a finite time singularity of (\ref{MCF1}) that works in all dimensions $n \ge 2$.

\begin{theorem}
Assume that for the mean curvature flow (\ref{MCF1}), we have the following integral bound on the second fundamental form
\begin{equation}
 \norm{A}_{L^{p, q}(M\times [0, T))}: = \left(\int_{0}^{T}\left(\int_{M_{t}}\abs{A}^{q} d\mu\right)^{p/q} dt\right)^{1/p}< \infty
\label{intboundA}
\end{equation}
where $p, q\in (0, \infty)$ satisfy \begin{equation*}
 \frac{n}{q} + \frac{2}{p} =1.
\end{equation*}
\h Then the flow can be extended past time $T$.
\label{SFbound}
\end{theorem}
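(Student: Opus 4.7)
The plan is to argue by contradiction, combining Huisken's Theorem \ref{Aunbound} with the parabolic analogue of the Choi--Schoen estimate advertised in the abstract and absolute continuity of the Lebesgue integral.

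Assume the flow cannot be smoothly extended past $T$. Theorem \ref{Aunbound} then gives $\sup_{M_{t}}|A|(\cdot,t)\to\infty$ as $t\to T$. I would derive a contradiction by showing that in fact $|A|$ is uniformly bounded on $M\times[T-\eta_{0},T)$ for a suitable $\eta_{0}>0$.

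The main tool I would invoke is the local regularity theorem of this paper, which I expect to take the form: there exist constants $\epsilon_{0}=\epsilon_{0}(n,p,q)>0$ and $C=C(n,p,q)>0$ such that, for any smooth mean curvature flow, the smallness condition
$$\left(\int_{t_{0}-r^{2}}^{t_{0}}\Bigl(\int_{B_{r}(y_{0})\cap M_{s}}|A|^{q}\,d\mu\Bigr)^{p/q}\,ds\right)^{1/p}<\epsilon_{0}$$
forces $|A|^{2}\leq Cr^{-2}$ throughout the halved parabolic cylinder centered at $(y_{0},t_{0})$. The constraint $\tfrac{n}{q}+\tfrac{2}{p}=1$ is exactly the parabolic scale-invariance of this local norm and is what makes such a regularity statement possible.

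Given the theorem above, the remainder of the argument is straightforward. Since (\ref{intboundA}) places the function $t\mapsto\bigl(\int_{M_{t}}|A|^{q}\,d\mu\bigr)^{p/q}$ in $L^{1}([0,T))$, absolute continuity of the Lebesgue integral supplies $\eta_{0}>0$ with
$$\|A\|_{L^{p,q}(M\times[T-\eta_{0},T))}<\epsilon_{0}.$$
For arbitrary $(y_{0},t_{0})\in M_{t_{0}}\times[T-\eta_{0}/2,T)$ and $r:=\sqrt{\eta_{0}/2}$, one has $[t_{0}-r^{2},t_{0}]\subset[T-\eta_{0},T)$, and monotonicity of the integral in its domain gives
$$\left(\int_{t_{0}-r^{2}}^{t_{0}}\Bigl(\int_{B_{r}(y_{0})\cap M_{s}}|A|^{q}\,d\mu\Bigr)^{p/q}\,ds\right)^{1/p}\leq\|A\|_{L^{p,q}(M\times[T-\eta_{0},T))}<\epsilon_{0}.$$
The local regularity estimate then yields $|A|(y_{0},t_{0})\leq C/r=C\sqrt{2/\eta_{0}}$, a bound independent of $(y_{0},t_{0})$. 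This contradicts Theorem \ref{Aunbound} and completes the proof.

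The main obstacle is of course establishing the local regularity theorem itself (proved earlier in the paper); once it is in hand with the mixed-norm exponents above, the present extension result falls out of this short absolute-continuity argument. Should the paper's version be stated only for the diagonal exponent $L^{n+2}$ in spacetime, a preliminary H\"older step reducing $L^{p,q}$-smallness to $L^{n+2}$-smallness on a comparable subcylinder would bridge the gap, since both norms share the same parabolic scaling.
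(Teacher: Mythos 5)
Your argument is correct, but it is genuinely different from the paper's. The paper proves Theorem~\ref{SFbound} in Section~\ref{sec-global} by a direct blow-up: it picks points $(x_i,t_i)$ realizing the running maximum $Q_i=|A|(x_i,t_i)\to\infty$, rescales by $Q_i$ to obtain a sequence of flows converging (via the compactness result in \cite{ChenHe}) to a complete smooth limit flow $\tilde M_t$ on $[0,1]$ with $|\tilde A|(0,1)=1$, and then uses the parabolic scale-invariance of the $L^{p,q}$ norm under the constraint $\tfrac{n}{q}+\tfrac{2}{p}=1$ together with Fatou's lemma to show the limit must have $|\tilde A|\equiv 0$ on $B(0,1)\times[0,1]$, a contradiction. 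You instead invoke the localized $\epsilon$-regularity statement of Remark~\ref{lemmapq} (the $L^{p,q}$ generalization of Theorem~\ref{CS}) together with absolute continuity of the Lebesgue integral, which gives a uniform curvature bound on the final time slab $[T-\eta_0/2,T)$ directly, contradicting Theorem~\ref{Aunbound}. Both routes hinge on the same scale-invariance, but they realize it differently: the paper's blow-up argument is self-contained within Section~\ref{sec-global} and does not depend on the $\epsilon$-regularity machinery developed afterward in Section~\ref{sec-local}, whereas your argument treats Remark~\ref{lemmapq} as a black box and in exchange is shorter and produces an explicit quantitative curvature bound near the singular time. One caveat: your fallback suggestion of bridging from $L^{p,q}$-smallness to diagonal $L^{n+2}$-smallness via a H\"older step does not work in general, since when $q<n+2$ (allowed, as long as $q>n$) one cannot control the $L^{n+2}$ spatial norm by the $L^q$ norm on a ball; fortunately the paper does state the general $L^{p,q}$ form in Remark~\ref{lemmapq}, so the bridge is unnecessary.
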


The previously mentioned results were all global characterizations ensuring that the flow can not develop any singularities 
as long as some global quantities are bounded uniformly in time. We also give a result regarding the local regularity theory.

\begin{theorem}
 Suppose $\mathcal{M} = (M_{t})$ is a smooth, properly embedded
solution of the mean curvature flow in $B(x_{0}, \rho)\times (t_{0}-\rho^2, t_{0})$ which reaches $x_{0}$ at time $t_{0}$. 
There exists $\e_{0} = \e_{0}(M_{0})>0$ such that if $0<\sigma\leq \rho$ and
\begin{equation}
 \int_{t_{0}-\sigma^2}^{t_{0}} \int_{M_{t}\cap B(x_{0},\sigma)}\abs{A}^{n +2} d\mu dt <\e_{0}
\end{equation}
then
\begin{equation}
 \max_{0\leq\delta\leq \sigma/2} \sup_{t\in [t_{0}-(\sigma-\delta)^2,t_{0})} \sup_{x\in B(x_{0}, \sigma-\delta)\cap M_{t}} 
\delta^2 \abs{A}^2 (x,t) < \e_{0}^{\frac{-2}{n +2}} 
(\int_{t_{0}-\sigma^2}^{t_{0}} \int_{M_{t}\cap B(x_{0},\sigma)}\abs{A}^{n +2} d\mu dt)^{\frac{2}{n +2}}.
\label{CSMCF}
\end{equation}
\label{CS}
\end{theorem}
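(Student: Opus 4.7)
The plan is to adapt the Choi--Schoen point-picking argument to the parabolic setting of mean curvature flow. The proof proceeds in three stages: \textbf{(i)} a point-picking step producing an interior point $(x_1,t_1)$ and scale $\delta_0$ at which $\delta_0^2|A|^2$ is maximized and on a surrounding parabolic cylinder $|A|^2$ is controlled; \textbf{(ii)} a parabolic rescaling by $\sqrt{|A|^2(x_1,t_1)}$ to normalize $|A|^2$; and \textbf{(iii)} a parabolic Moser iteration built from Simons' evolution identity for $|A|^2$ and the Michael--Simon Sobolev inequality.

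For step (i), write $P(r):=\{(x,t):x\in B(x_0,r)\cap M_t,\;t\in[t_0-r^2,t_0)\}$ and let $K:=\max_{0\le\delta\le\sigma/2}\sup_{P(\sigma-\delta)}\delta^2|A|^2$, attained at some $\delta_0\in(0,\sigma/2]$ and $(x_1,t_1)\in P(\sigma-\delta_0)$. Setting $\lambda:=|A|^2(x_1,t_1)$ so $K=\delta_0^2\lambda$, a triangle-inequality check in both the spatial and temporal variables shows that the parabolic cylinder $P_{\delta_0/2}(x_1,t_1):=B(x_1,\delta_0/2)\cap M_s\times[t_1-(\delta_0/2)^2,t_1]$ is contained in $P(\sigma-\delta_0/2)$; by maximality of $K$ this yields $|A|^2\le 4\lambda$ throughout $P_{\delta_0/2}(x_1,t_1)$.

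Next, parabolically rescale about $(x_1,t_1)$: set $\tilde F(p,s):=\sqrt\lambda\bigl(F(p,t_1+s/\lambda)-x_1\bigr)$. The rescaled family $\{\tilde M_s\}$ is again a smooth MCF with $|\tilde A|^2(0,0)=1$ and $|\tilde A|^2\le 4$ on the parabolic ball $\tilde P_R$ of radius $R=\sqrt K/2$. Simons' identity $\partial_t|A|^2=\Delta|A|^2-2|\nabla A|^2+2|A|^4$ yields $(\partial_s-\tilde\Delta)|\tilde A|^2\le 8|\tilde A|^2$ on $\tilde P_R$, so $v:=|\tilde A|^2$ is a nonnegative subsolution of a linear parabolic inequality with bounded coefficient. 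Running parabolic Moser iteration using the Michael--Simon Sobolev inequality on each slice $\tilde M_s\subset\mathbb R^{n+1}$ (whose constant depends only on $n$ because $|\tilde H|\le\sqrt n\,|\tilde A|\le 2\sqrt n$ is uniformly bounded) produces
\begin{equation*}
\sup_{\tilde P_{R/2}}|\tilde A|^2\le C(n)\,R^{-2}\left(\int_{\tilde P_R}|\tilde A|^{n+2}\,d\tilde\mu\,ds\right)^{2/(n+2)}.
\end{equation*}
Parabolic scale-invariance of $|A|^{n+2}d\mu\,dt$ together with $P_{\delta_0/2}(x_1,t_1)\subset P(\sigma)$ bounds the rescaled integral by $I:=\int_{P(\sigma)}|A|^{n+2}d\mu\,dt$. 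Substituting $|\tilde A|^2(0,0)=1$ and $R^{-2}=4/K$ yields $K\le 4C(n)\,I^{2/(n+2)}$, so choosing $\varepsilon_0:=(5C(n))^{-(n+2)/2}$ (or smaller) gives the strict estimate~(\ref{CSMCF}).

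The principal obstacle is running parabolic Moser iteration cleanly on the evolving submanifolds: one must integrate Simons' inequality against space--time cutoffs, apply Michael--Simon at each time slice, and handle the quartic term $2|A|^4$. The point-picking step is essential because after the parabolic rescaling this quartic term becomes a bounded linear term, placing the estimate inside the standard linear Moser framework. A secondary subtlety is the choice of space--time cutoff in step (i): one needs both a spatial and a temporal margin of size at least $\delta_0/2$ about $(x_1,t_1)$, which is exactly what the inclusion $P_{\delta_0/2}(x_1,t_1)\subset P(\sigma-\delta_0/2)$ delivers.
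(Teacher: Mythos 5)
Your proposal follows the same overall scheme as the paper---point-picking on $\delta^2|A|^2$, parabolic rescaling, linearizing Simons' inequality using the curvature bound from the point-picking step, and a Moser-type mean value inequality---but there is a genuine gap in the Moser step, and the gap is exposed by a suspicious feature of your write-up: nowhere do you actually use the smallness hypothesis $\int\int|A|^{n+2}<\e_0$. Your argument, if it worked, would prove the unconditional estimate $K\le 4C(n)\,I^{2/(n+2)}$ with $K:=\max_{\delta\le\sigma/2}\sup\delta^2|A|^2$ and $I:=\int\int_{P(\sigma)}|A|^{n+2}$, but this is false: for the shrinking sphere observed at time $t_0=T-\alpha$ with $\sigma$ comparable to the initial radius, one has $K\sim\alpha^{-1}$ while $I\sim\log(1/\alpha)$, so $K/I^{2/(n+2)}\to\infty$ as $\alpha\to 0$.

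The flaw is in the claim that Moser iteration on the rescaled flow gives $\sup_{\tilde P_{R/2}}|\tilde A|^2\le C(n)R^{-2}(\int_{\tilde P_R}|\tilde A|^{n+2})^{2/(n+2)}$ with $C(n)$ independent of $R=\sqrt K/2$. After you rescale to a unit ball, the coefficient of the linear potential term becomes $8R^2$ (and the curvature bound becomes $4R^2$, and the Bishop--Gromov volume factor degrades exponentially in $R$), so the Moser constant genuinely deteriorates once $R>1$. Your normalization $\tilde F=\sqrt\lambda(F-x_1)$, with $\lambda=|A|^2(x_1,t_1)$, fixes $|\tilde A|^2(0,0)=1$ but leaves the size of the parabolic domain $\tilde P_R$ uncontrolled. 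The paper's rescaling is instead by $Q=2(\e_0\eta^{-1})^{1/(n+2)}|A|(x_*,t_*)$, and the whole argument is run as a contradiction: assuming $F(\delta_*)\ge(\e_0^{-1}\eta)^{2/(n+2)}$ forces $Q\delta_*/2\ge 1$ (so the rescaled flow is defined on at least a unit ball), while the hypothesis $\eta<\e_0$ forces $|\tilde A|^2\le 1$ on that unit ball. Moser is then applied on the fixed scale $1$, giving a uniform dimensional constant, and a contradiction drops out. To repair your direct version you would have to split into the cases $R\le 1$ (where your estimate is fine) and $R>1$ (where you must instead apply the mean value inequality on $\tilde P_1\subset\tilde P_R$ to get $1=|\tilde A|^2(0,0)\le C(n)I^{2/(n+2)}$, hence $I\ge C(n)^{-(n+2)/2}$, which contradicts $I<\e_0$ for $\e_0$ small). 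You also omit the preliminary reduction to flows smooth up to and including $t_0$, needed so that the supremum in the point-picking step is actually attained; the paper handles this by replacing $t_0$ with $t_0-\alpha$ and letting $\alpha\searrow 0$.

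As a minor point, the Michael--Simon Sobolev constant depends only on $n$ regardless of the mean curvature bound; the role of $|\tilde H|\le 2\sqrt n$ is rather to absorb the $|H|f$ term into the Moser iteration, not to control the Sobolev constant itself. The paper sidesteps this by citing Ecker's Moser mean value inequality and then using H\"older together with a Bishop--Gromov volume bound on the unit ball (which is where the Gauss equation lower bound $\tilde R_{ik}\ge-(n-1)$ is used), a route you may find cleaner than running the iteration from scratch.
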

\h Our theorem is a parabolic version of Choi-Schoen estimate \cite{CS} for minimal surfaces. Related results can be found in Ecker \cite{EckerCalvar}.
The precise estimate of the form (\ref{CSMCF}) for the case of minimal submanifolds can be found in  Shen-Zhu \cite{SZ}, Proposition 2.2 (see 
also \cite{CMMS}). Moreover, in \cite{LS, XYZ1, XYZ2}, the authors showed that if the $L^{n +2}$ norm in space-time of the second
fundamental form (or the mean curvature but under various convexity assumptions) is finite then it is possible to extend the mean curvature flow
beyond the time interval under consideration. Our theorem can be viewed as a local version of these results without 
imposing any convexity assumptions. It turns out that the conclusion of Theorem \ref{CS} also 
holds in the case when the ambient space is a complete Riemannian manifold with bounded geometry.  We show that in Corollary \ref{cor-ext}.

The organization of the paper is as follows. In section \ref{sec-typeI} we give the proofs of Theorems \ref{MCbound} 
and \ref{intMCbound}.  In section \ref{3d} we prove Theorems \ref{3dcase} and \ref{3dreg}. The proof of Theorem \ref{SFbound}
will be given in section \ref{sec-global}. We conclude the paper with section \ref{sec-local} in which we prove  Theorem \ref{CS} and give some applications to it.

{\bf Acknowledgements:} The authors would like to thank Rick Schoen and Bill Minicozzi for helpful discussions. Minicozzi pointed out to us to try to prove Theorem \ref{3dcase} under the Multiplicity One Conjecture.

\section{Characterization of type I singularities}
\label{sec-typeI}
This section is concerned with the proofs of Theorems \ref{MCbound} and \ref{intMCbound}.
\begin{proof}[Proof of Theorem \ref{MCbound}]
Without loss of generality, assume that $M^{n}\subset B_{1}(0) \subset \RR^{n+1}$. Let $y_{0}\in \RR^{n+1}$ be a point reached by the mean curvature 
flow (\ref{MCF1}) at time $T$, that is, there exists a sequence $(y_{j}, t_{j})$ with $t_{j}\nearrow T$ so that $y_{j}\in M_{t_{j}}$ and 
$y_{j}\rightarrow y_{0}$. We show that $(y_{0}, T)$ is a regular point of (\ref{MCF1}). \\
Note that the distance estimate (\cite{Ecker}, Corollary 3.6) gives
\begin{equation}
\mathrm{dist} (M_{t}, y_{0}) \leq \sqrt{2n(T-t)}, ~\text{for}~ t<T. 
\label{dist}
\end{equation}

Consider the parabolic dilation $D_{\lambda}: \RR^{n+1} \times [0, T) \to \RR^{n+1}\times [-\lambda^2 T, 0)$ of scale $\lambda >0$ at $(y_{0}, T)$ 
defined by
\begin{equation}
 D_{\lambda}(y, t) = (\lambda (y-y_{0}), \lambda^2 (t-T)).
\end{equation}
Denote the new time parameter by $s$. Then $t = T + \frac{s}{\lambda^2}$. Let
\begin{equation*}
 M^{\lambda}_{s} \equiv M^{(y_{0}, T), \lambda}_{s} = D_{\lambda} (M_{t}) = \lambda (M_{T + \frac{s}{\lambda^2}} -y_{0}).
\end{equation*}
Then $(M^{\lambda}_{s})$ is a solution of the mean curvature flow in $B_{\lambda}(0)$ for $s\in [-\lambda^2 T, 0)$. 
Denote by $d\mu^{\lambda}_{s}$ the induced volume form on $M^{\lambda}_{s}$.
Let $\rho_{y_{0, T}}: \RR^{n+1}\times (-\infty, T) \to \RR$ be the backward heat kernel at $(y_{0}, T)$, i.e, 
\begin{equation}
 \rho_{y_{0}, T} (y, t) = \frac{1}{[4\pi(T-t)]^{n/2}} \mathrm{exp}(-\frac{\abs{y-y_{0}}^2}{4(T-t)}).
\label{BWH}
\end{equation}
The monotonicity formula of Huisken \cite{Huisken90} says that
\begin{equation}
\frac{d}{dt}\int_{M_{t}} \rho_{y_{0}, T} d\mu_{t} = -  \int_{M_{t}} \rho_{y_{0}, T} \abs{H + \frac{F^{\perp}}{2(T-t)}}^2d\mu_{t},
\label{Hmono}
\end{equation}
from which it follows that the limit $\lim_{t\rightarrow T} \int_{M_{t}} \rho_{y_{0}, T} d\mu_{t}$ exists. Here $F^{\perp}(\cdot,t)$ is the normal 
component of the position vector $F(\cdot,t)\in \RR^{n+1}$ in the normal space of $M_{t}$ in $\RR^{n+1}$. Via the parabolic dilation, (\ref{Hmono})
becomes
\begin{equation}
 \frac{d}{ds}\int_{M^{\lambda}_{s}} \rho_{0, 0} d\mu^{\lambda}_{s} = - 
 \int_{M^{\lambda}_{s}} \rho_{0, 0} \abs{H^{\lambda}_{s} - \frac{(F^{\lambda}_{s})^{\perp}}{2s}}^2d\mu^{\lambda}_{s}.
\label{DHmono}
\end{equation}
Fix $s_{0}<0$. Integrating both sides of (\ref{DHmono}) from $s_{0}-\tau$ to $s_{0}$ for $\tau>0$, we get
\begin{equation}
\int_{s_{0}-\tau}^{s_{0}}\int_{M^{\lambda}_{s}} \rho_{0, 0} \abs{H^{\lambda}_{s} - \frac{(F^{\lambda}_{s})^{\perp}}{2s}}^2d\mu^{\lambda}_{s} ds = 
\int_{M^{\lambda}_{s_{0}-\tau}} \rho_{0, 0} d\mu^{\lambda}_{s_{0}-\tau} - \int_{M^{\lambda}_{s_{0}}} \rho_{0, 0} d\mu^{\lambda}_{s_{0}}.
\label{Hint}
\end{equation}
Let $t_{1} = T + \frac{s_{0}}{\lambda^2}$. Then, by the invariance of $\int_{M_{t}} \rho_{y_{0}, T} d\mu_{t}$ under the parabolic scaling,
\begin{equation*}
 \int_{M_{t_{1}}} \rho_{y_{0}, T} d\mu_{t_{1}} = \int_{M^{\lambda}_{s_{0}}} \rho_{0, 0} d\mu^{\lambda}_{s_{0}}.
\end{equation*}
Letting $\lambda \rightarrow \infty$, one has $t_{1}\rightarrow  T$ and 
\begin{equation*}
 \lim_{\lambda\rightarrow \infty}\int_{M^{\lambda}_{s_{0}}} \rho_{0, 0} d\mu^{\lambda}_{s_{0}} = 
\lim_{t\rightarrow T}\int_{M_{t}} \rho_{y_{0}, T} d\mu_{t}.
\end{equation*}
Similarly,
\begin{equation*}
 \lim_{\lambda\rightarrow \infty}\int_{M^{\lambda}_{s_{0}-\tau}} \rho_{0, 0} d\mu^{\lambda}_{s_{0}-\tau} = 
\lim_{t\rightarrow T}\int_{M_{t}} \rho_{y_{0}, T} d\mu_{t}.
\end{equation*}
Therefore, by (\ref{Hint}),
\begin{equation}
\lim_{\lambda\rightarrow \infty} \int_{s_{0}-\tau}^{s_{0}}\int_{M^{\lambda}_{s}} \rho_{0, 0} \abs{H^{\lambda}_{s} - 
\frac{(F^{\lambda}_{s})^{\perp}}{2s}}^2d\mu^{\lambda}_{s} ds=0.
\label{vanish}
\end{equation}
On the other hand, the second fundamental form of $M^{\lambda}_{s}$ satisfies 
\begin{equation*}
 \mathrm{max}\abs{A}^2(\cdot,s) (M^{\lambda}_{s}) = \frac{1}{\lambda^2} \mathrm{max}\abs{A}^2(\cdot,t) (M_{t}) 
= -\frac{1}{s} (T-t) \mathrm{max}\abs{A}^2 (\cdot, t) (M_{t}).  
\end{equation*}
and thus, by (\ref{typeI}),
\begin{equation*}
 \mathrm{max}\abs{A}^2(\cdot,s) (M^{\lambda}_{s}) \leq  \frac{-C_{0}}{s}, ~\forall s\in
[-\lambda^2 T, 0).
\end{equation*}
In particular, for fixed $\delta \in (0, 1/2)$, the inequality
\begin{equation}
 \abs{A(y)}^2 \leq \frac{C_{0}}{\delta^2}
\end{equation}
holds for $y\in M^{\lambda}_{s}\cap B_{\lambda}$ and $s\in [-\lambda ^2 T, -\delta^2]$ and therefore for $y \in M^{\lambda}_{s}\cap B_{1/\delta}$ and 
$s\in [-1/\delta^2, -\delta^2]$ for $\lambda$ sufficiently large depending on $\delta$, say $\lambda \geq \lambda_{\delta}$. By the interior 
estimate \cite{EH}, one has for all $m\geq 0$
\begin{equation}
 \abs{\nabla ^{m} A(y)}^2 \leq \frac{C(C_{0}, m,n)}{\delta^{2(m +1)}}
\end{equation}
for $y \in M^{\lambda}_{s}\cap B_{1/2\delta}$ and $s \in [-1/4\delta^2, -\delta^2]$. Moreover, by (\ref{dist}),
\begin{equation*}
 \mathrm{dist} (0, M^{\lambda}_{s}) = \lambda\mathrm{dist} (y_{0}, M_{T + \frac{s}{\lambda^2}}) \leq \lambda \sqrt{2n(\frac{-s}{\lambda^2})} =
\sqrt{-2ns}
\end{equation*}
for the above times $s$ and $\lambda\geq \lambda_{\delta}$.
 By Arzela-Ascoli theorem combined with a diagonal sequence argument when letting $\delta\searrow 0$ for local graph representations of 
$(M^{\lambda}_{s})$, we can find a subsequence
$\lambda_{i}\rightarrow \infty$ such that $(M^{\lambda_{i}}_{s})$ converges smoothly on compact
subsets of $\RR^{n +1}\times (-\infty,0)$ to a smooth solution $(M^{\infty}_{s})_{s<0}$ of mean curvature flow. From
(\ref{vanish}), one sees that $H = \frac{1}{2s}F^{\perp}$ on $M^{\infty}_{s}$ for $s\in (s_{0}-\tau, s_{0})$. \\
Take $s_{0}\rightarrow 0$ and $\tau\rightarrow \infty$ to see that $H= \frac{1}{2s} F^{\perp}$ on $M^{\infty}_{s}$ for $-\infty<s<0$. In other words, 
$(M^{\infty})_{s}$ is a self-shrinking mean curvature flow. Moreover, one deduces from (\ref{Hbound}) and
$\abs{H^{\lambda}_{s}} =\abs{\frac{H_{t}}{\lambda}}$ that $H =0$ on $M_s^{\infty}$. Thus $M^{\infty}_{s}$ is a minimal cone for each $s<0$; see 
Corollary 2.8 in \cite{CM}. Because $M_{s}^{\infty}$ is smooth, it is a hyperplane.
Now, fix $s_{0}<0$. One has, as $i\rightarrow \infty$, $M^{\lambda_{i}}_{s_{0}}\rightarrow M^{\infty}_{s_{0}}\cong \RR^{n}$ and
$d\mu^{\lambda_{i}}_{s_{0}}\rightarrow d x^{n}.$ Thus
\begin{equation*}
 \lim_{i\rightarrow \infty }\int_{M^{\lambda_{i}}_{s_{0}}} \rho_{0, 0} d\mu^{\lambda_{i}}_{s_{0}} = \int_{M^{\infty}_{s_{0}}}\rho_{0,0} d x^n =1.
\end{equation*}
This implies that, for $t_{i} = T + \frac{s_{0}}{\lambda^2_{i}}$
\begin{equation}
\lim_{t_{i}\rightarrow T}\int_{M_{t_{i}}} \rho_{y_{0}, T} d\mu_{t_{i}} =1,
\end{equation}
and therefore 
$$\lim_{t\to T}\int_{M_t} \rho_{y_0,T}\, d\mu_t = 1.$$
By White's regularity theorem \cite{White}, the second fundamental form $\abs{A}(\cdot, t)$ of $M_{t}$ is bounded as $t\rightarrow T$
and $(y_{0}, T)$ is a regular point. Thus, the 
flow can be extended 
past time $T$.
\end{proof}
\begin{proof}[Proof of Theorem \ref{intMCbound}]
We will split the proof of Theorem \ref{intMCbound} in the following two lemmas.
\end{proof}

\begin{lemma}
Theorem \ref{intMCbound} holds for $\alpha > n+2$.
\end{lemma}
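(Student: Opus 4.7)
The plan is to adapt the blow-up scheme of Theorem~\ref{MCbound}, replacing the pointwise bound on $H$ by a scaling computation for its space-time $L^{\alpha}$ norm. Everything up to the identification of the blow-up limit $M^{\infty}_{s}$ as a self-shrinker is driven only by the type~I hypothesis (\ref{typeI}), Huisken's monotonicity formula (\ref{DHmono}) and Ecker-Huisken interior estimates, so I would keep that part verbatim: for a fixed $y_{0}\in \RR^{n+1}$ reached at time $T$, the parabolic dilations $D_{\lambda}$ yield a subsequence $\lambda_{i}\to\infty$ such that $M^{\lambda_{i}}_{s}$ converges smoothly on compact subsets of $\RR^{n+1}\times (-\infty,0)$ to a smooth self-shrinker $(M^{\infty}_{s})_{s<0}$ satisfying $H=F^{\perp}/(2s)$.

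The new ingredient is the scaling of $\norm{H}_{L^{\alpha}}$. Since under $D_{\lambda}$ one has $H^{\lambda}_{s}=H_{t}/\lambda$, the rescaled spatial volume form is $\lambda^{n}d\mu_{t}$, and $ds=\lambda^{2}\,dt$, a direct change of variables gives
\begin{equation*}
\int_{-\lambda^{2}T}^{0}\int_{M^{\lambda}_{s}}\abs{H^{\lambda}_{s}}^{\alpha}\,d\mu^{\lambda}_{s}\,ds \;=\; \lambda^{\,n+2-\alpha}\,\norm{H}^{\alpha}_{L^{\alpha}(M\times[0,T))} \;\leq\; \lambda^{\,n+2-\alpha}\,C_{0}^{\alpha}.
\end{equation*}
Because $\alpha>n+2$, the right-hand side tends to $0$ as $\lambda\to\infty$. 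For any compact $K\subset \RR^{n+1}$ and any $[a,b]\subset(-\infty,0)$, the smooth convergence $M^{\lambda_{i}}_{s}\to M^{\infty}_{s}$ allows me to pass the integral over $[a,b]\times (M^{\lambda_{i}}_{s}\cap K)$ to the limit, producing
\begin{equation*}
\int_{a}^{b}\int_{M^{\infty}_{s}\cap K}\abs{H^{\infty}}^{\alpha}\,d\mu^{\infty}_{s}\,ds \;=\; 0.
\end{equation*}
Hence $H^{\infty}\equiv 0$; the self-shrinker identity then forces $F^{\perp}\equiv 0$, so each $M^{\infty}_{s}$ is a smooth minimal cone through the origin, and hence a hyperplane by Corollary~2.8 of \cite{CM}.

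From here the ending of Theorem~\ref{MCbound} closes the argument word for word: the Gaussian density at $(y_{0},T)$ equals $\int_{M^{\infty}_{s_{0}}}\rho_{0,0}\,dx^{n}=1$, White's regularity theorem \cite{White} supplies uniform curvature bounds near $(y_{0},T)$, and the flow extends past $T$. I expect the only real technical point to be the parabolic scaling identity for $\norm{H}^{\alpha}_{L^{\alpha}}$ and the justification for interchanging limit and integral on compact subsets of space-time; both are routine once the smooth subsequential convergence of Theorem~\ref{MCbound} is in hand. The strict inequality $\alpha>n+2$ is essential precisely to make the scaling exponent $n+2-\alpha$ negative so that the rescaled $L^{\alpha}$ mass of $H$ on any fixed compact region vanishes in the limit.
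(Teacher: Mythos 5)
Your proposal is correct and follows essentially the same approach as the paper: both compute the parabolic scaling of the space-time $L^{\alpha}$ norm of $H$, obtaining the factor $\lambda^{n+2-\alpha}$, conclude that the rescaled $L^{\alpha}$ mass vanishes because $\alpha>n+2$, pass this to the smooth blow-up limit to get $H\equiv 0$ on the self-shrinker, and then finish exactly as in Theorem~\ref{MCbound}. Your version is slightly more explicit about justifying the interchange of limit and integral on compact space-time subsets, which the paper leaves implicit, but there is no substantive difference.
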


\begin{proof}
We use the same notations as in the proof of Theorem \ref{MCbound}. Note that under the parabolic 
dilations $D_{\lambda_{i}}$, the inequality (\ref{intHbound}) becomes
\begin{equation*}
 C_{0}^{\alpha} \geq \int_{0}^{T}\int_{M_{t}} \abs{H}^{\alpha} d\mu_{t} dt = 
\lambda_{i}^{\alpha - (n +2)}\int_{-\lambda_{i}^2 T}^{0}\int_{M^{\lambda_{i}}_{s}} \abs{H^{\lambda_{i}}_{s}}^{\alpha} d\mu^{\lambda_{i}}_{s} ds.
\end{equation*}
Thus
\begin{equation}
 \int_{-\lambda_{i}^2 T}^{0}\int_{M^{\lambda_{i}}_{s}} \abs{H^{\lambda_{i}}_{s}}^{\alpha} d\mu^{\lambda_{i}}_{s} ds 
\leq \frac{C_{0}^{\alpha}}{\lambda_{i}^{\alpha - (n +2)}}.
\end{equation}
Now, letting $i\rightarrow \infty$ as in the proof of Theorem \ref{MCbound}, we get a self-shrinking mean curvature flow $(M^{\infty})_{s}$
with the property that
\begin{equation}
 \int_{-\infty}^{0}\int_{M^{\infty}_{s}} \abs{H}^{\alpha} du^{\infty}_{s} ds =0
\end{equation}
because $\alpha > n +2$. Therefore $H =0$ on $M^{\infty}_{s}$. Now
we can argue similarly as in the proof of Theorem \ref{MCbound}. 
\end{proof}

\begin{lemma}
Theorem \ref{intMCbound} holds for $\alpha = n+2$.
\end{lemma}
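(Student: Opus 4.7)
The plan is to mimic the proof of the previous lemma (the case $\alpha > n+2$), exploiting the same blow-up procedure and limiting self-shrinker obtained there. The only new ingredient needed is a substitute for the vanishing of $H$ on the limit, since at the borderline exponent $\alpha = n+2$ the $L^{\alpha}$ norm of $H$ is \emph{scale-invariant} under the parabolic dilations $D_{\lambda_i}$, so the factor $\lambda_i^{-(\alpha - (n+2))}$ that drove the previous argument disappears. The remedy is to use absolute continuity of the integral to localize mass near the singular time.

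Concretely, since $\int_0^T\!\!\int_{M_t}|H|^{n+2}d\mu_t\,dt \leq C_0^{\,n+2}$ is finite, for every $\e > 0$ there exists $\delta = \delta(\e) > 0$ such that
\begin{equation*}
\int_{T-\delta}^{T}\!\!\int_{M_t}|H|^{n+2}\,d\mu_t\,dt \;<\; \e.
\end{equation*}
Because the $L^{n+2}$ norm in spacetime is scale-invariant under $D_{\lambda_i}$, applying this rescaling and restricting to the corresponding time window $[-\delta \lambda_i^2, 0]$ gives
\begin{equation*}
\int_{-\delta \lambda_i^2}^{0}\!\!\int_{M_s^{\lambda_i}}|H_s^{\lambda_i}|^{n+2}\,d\mu_s^{\lambda_i}\,ds \;<\; \e.
\end{equation*}
Fix any $s_0 < 0$, $\tau > 0$, and $R > 0$. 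For $\lambda_i$ sufficiently large, $[s_0 - \tau, s_0] \subset [-\delta \lambda_i^2, 0]$, and hence
\begin{equation*}
\int_{s_0 - \tau}^{s_0}\!\!\int_{M_s^{\lambda_i}\cap B_R}|H_s^{\lambda_i}|^{n+2}\,d\mu_s^{\lambda_i}\,ds \;<\; \e.
\end{equation*}

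Next, I would pass to the limit. The smooth convergence of $(M_s^{\lambda_i})$ on compact subsets of $\RR^{n+1}\times(-\infty,0)$ to a smooth self-shrinking limit $(M_s^\infty)$ follows exactly as in the proof of Theorem \ref{MCbound}, relying on the type I bound \eqref{typeI} (still available by hypothesis) and the interior estimates of Ecker-Huisken. Using this smooth convergence on the compact set $\overline{B_R}\times[s_0-\tau, s_0]$ and then sending $R \to \infty$ by monotone convergence gives
\begin{equation*}
\int_{s_0 - \tau}^{s_0}\!\!\int_{M_s^{\infty}}|H|^{n+2}\,d\mu_s^{\infty}\,ds \;\leq\; \e.
\end{equation*}
Since $\e > 0$ was arbitrary, $H \equiv 0$ on $M_s^\infty$ for $s \in (s_0 - \tau, s_0)$. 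Taking $s_0 \nearrow 0$ and $\tau \nearrow \infty$ yields $H \equiv 0$ on the entire limiting flow.

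Having reached $H\equiv 0$ on the self-shrinker, I would then close the argument exactly as in the proof of Theorem \ref{MCbound}: combining $H = \tfrac{1}{2s}F^\perp$ with $H=0$ forces $M_s^\infty$ to be a minimal cone (Corollary 2.8 in \cite{CM}), which, being smooth, must be a hyperplane; hence the Gaussian density limit $\lim_{t\to T}\int_{M_t}\rho_{y_0,T}\,d\mu_t = 1$, and White's regularity theorem \cite{White} implies $(y_0,T)$ is a regular point, so the flow extends past $T$. The only non-routine point is the interchange of limits in the borderline scaling regime; here the use of absolute continuity, restricted to balls $B_R$ before the parabolic blow-up, is the key trick and poses no real obstacle.
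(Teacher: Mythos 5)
Your argument is correct, and it takes a genuinely different route from the paper. The paper passes to the smooth self-shrinking limit $M_s^\infty$ with $\int_{-\infty}^{0}\int_{M^{\infty}_{s}}\abs{H}^{n+2}\,d\mu^{\infty}_{s}\,ds\le C_0<\infty$ and then exploits the explicit self-similar scaling: since $M_s^\infty=\sqrt{-s}\,M_{-1}^\infty$, one computes $\int_{M_s^\infty}\abs{H}^{n+2}\,d\mu_s^\infty = a/(-s)$ with $a=\int_{M_{-1}^\infty}\abs{H}^{n+2}\,d\mu_{-1}^\infty$, and the divergence of $\int_{-\infty}^{0}ds/(-s)$ forces $a=0$, hence $H\equiv0$. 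You instead avoid the self-shrinker structure at this step by using absolute continuity of the integral: the tail $\int_{T-\delta}^{T}\int_{M_t}\abs{H}^{n+2}$ can be made smaller than any $\e>0$, and since the $L^{n+2}$ spacetime norm is exactly scale-invariant this smallness persists on any fixed time window $[s_0-\tau,s_0]$ after rescaling; passing to the smooth limit on compact sets (and then $R\to\infty$) gives $\int_{s_0-\tau}^{s_0}\int_{M_s^\infty}\abs{H}^{n+2}\le\e$, and $\e$ arbitrary kills $H$. Each approach has its merits: the paper's is shorter once the limiting self-shrinker is in hand, since the $1/(-s)$ computation does all the work in one line; yours is more robust, relying only on a soft measure-theoretic fact and the scale invariance of the critical exponent, and it would transfer to settings where the limit is not known a priori to be exactly self-similar. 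The only place to be slightly careful in your write-up is the interchange of limits when restricting to $B_R$, since the rescaled surfaces may cross $\partial B_R$; this is handled in the standard way by first integrating over a slightly smaller ball $B_{R-\e'}$ where the graphical convergence is uniform and then letting $\e'\to0$, so it is a routine technicality and not a gap.
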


\begin{proof}
We use the same notation as in the proof of Theorem \ref{MCbound}.
Under the parabolic dilations $D_{\lambda_{i}}$, the inequality (\ref{intHbound}) becomes
\begin{equation*}
 C_{0} \geq \int_{0}^{T}\int_{M_{t}} \abs{H}^{n+2} d\mu_{t} dt = 
\int_{-\lambda_{i}^2 T}^{0}\int_{M^{\lambda_{i}}_{s}} \abs{H^{\lambda_{i}}_{s}}^{n+2} d\mu^{\lambda_{i}}_{s} ds.
\end{equation*}
Letting $i\to\infty$ as before we get a complete and smooth self-shrinker $M^{\infty}_s$ in the limit
with the property that
\begin{equation}
\label{eq-finite}
\int_{-\infty}^{0}\int_{M^{\infty}_{s}} \abs{H}^{n+2} d\mu^{\infty}_{s} ds \le C_0 < \infty.
\end{equation}
Our self-shrinker satisfies
$$H = \frac{\langle x,\nu\rangle}{(-2s)},$$
which is equivalent to saying that $M_s = \sqrt{-s}M_{-1}$, where $M_s$ satisfies the mean
curvature flow. Notice that
\begin{eqnarray*}
\int_{M_s} \abs{H}^{n+2}(\cdot,s)d\mu_s &=& \int_{M_{-1}} \left (\frac{\abs{H}(\cdot,-1)}{\sqrt{-s}}\right)^{n+2} 
\cdot (-s)^{\frac{n}{2}}d\mu_{-1} \\
&=& \frac{1}{(-s)}\cdot \int_{M_{-1}}\abs{H}^{n+2}(\cdot,-1)\, d\mu_{-1} \\
&=& \frac{a}{(-s)},
\end{eqnarray*}
where $a := \int_{M_{-1}}\abs{H}^{n+2}(\cdot,-1)\, d\mu_{-1}$. If $a > 0$ then
$$\int_{-\infty}^0\int_{M^{\infty}_{s}} \abs{H}^{n+2} d\mu^{\infty}_{s} ds = a\cdot \int_{-\infty}^0\frac{ds}{(-s)} = \infty,$$
which contradicts (\ref{eq-finite}). Therefore $a = 0$, which implies $H(\cdot, -1) = 0$ on $M^{\infty}_{-1}$. 
Similar argument shows that $H(\cdot,s) = 0$ on $M^{\infty}_{s}$ for every $s < 0$. To prove that $(y_0,T)$
is a regular point of the flow we argue as in the proof of Theorem \ref{MCbound}.
\end{proof}

\section{Extension results for surfaces}
\label{3d}

In \cite{I1} Ilmanen proposed the following conjecture.

\begin{conj}[Multiplicity One Conjecture]
If $M^{2}_0$ is embedded in $\mathbb{R}^3$, then for any family of rescalings $\lambda_{j}(M_{\lambda_{j}^{-2} s + T} -y_{0})$ with
$\lambda_{j}\rightarrow \infty$, there is a subsequence smoothly converging and with multiplicity one 
to the blowup $N_t$, that is, there are no concentration points or multiple layers in the limit.
\label{MultOne}
\end{conj}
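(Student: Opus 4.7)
My plan is to produce the blow-up as a weak (Brakke) mean curvature flow via Huisken's monotonicity formula, then use the codimension-one embedded structure in $\mathbb{R}^3$ to rule out concentration and doubled layers in the limit.

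First I would rescale by $\tilde M_s^j := \lambda_j(M_{\lambda_j^{-2} s + T} - y_0)$ and apply Huisken's monotonicity along the original flow exactly as in the proof of Theorem \ref{MCbound}. This provides uniform local Gaussian area bounds and forces the defect integrand in (\ref{DHmono}) to tend to zero along any sequence $\lambda_j \to \infty$. Extracting a subsequence via Ilmanen's compactness theorem for integral Brakke flows then produces a limit $N_s$ that distributionally satisfies $H + F^{\perp}/(2s) = 0$, i.e.\ a self-similarly shrinking flow. White's local regularity theorem immediately gives smooth, multiplicity-one subsequential convergence on the open spacetime set where the Gaussian density of $N_s$ equals one; so the conjecture reduces to ruling out points of density $\geq 2$ in $N_s$.

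Next I would exploit the two special features of the setup: each $\tilde M_s^j$ is embedded, and the ambient dimension is three. Together these imply that any doubled accumulation must appear as two disjoint embedded sheets of $\tilde M_s^j$ bounding a thin slab of regular domain. I would attempt to control the width of this slab using the avoidance/comparison principle for mean curvature flow: collapse of the slab would generate curvature blow-up on $\tilde M^j_s$ along the pinching direction at a rate inconsistent with the Brakke regularity of the smooth shrinker $N_s$ in a punctured neighborhood of the doubled set, yielding a contradiction. A secondary line of attack would be to invoke the (then emerging) classification of smooth embedded self-shrinkers in $\mathbb{R}^3$ to rigidify the possible limits and argue layer-by-layer using the strong maximum principle.

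The hard part is precisely this last step, and it is the main obstacle. Ruling out collapsing multi-sheet limits of smooth embedded flows in full generality is the substance of Conjecture \ref{MultOne} itself, and no purely local argument of the type used in Theorem \ref{MCbound} is known to suffice. A realistic variant of the plan would need either Colding--Minicozzi-type $F$-stability classifications combined with a global entropy bound on $M_0$, or some one-sided minimization coming from mean convexity. This is also the reason the authors establish Theorem \ref{3dreg} directly under the strict Gaussian density bound (\ref{below2}), rather than deducing it from Conjecture \ref{MultOne}: under that bound, a doubled-layer limit is excluded on quantitative grounds, which is exactly the gap the conjecture is designed to fill.
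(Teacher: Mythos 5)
The statement you were asked to prove is not proven in the paper at all: it is Ilmanen's Multiplicity One Conjecture, which the paper explicitly labels as a conjecture (quoting \cite{I1}) and then uses only as a \emph{hypothesis} in the proof of Theorem \ref{3dcase}. There is therefore no ``paper proof'' to compare against, and the correct answer for this item is to recognize that the statement is open.

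You do, in fact, recognize this, and your write-up is honest about it. The first two paragraphs of your plan are a sound summary of the standard machinery: parabolic rescaling, Huisken's monotonicity to kill the defect term, Ilmanen's compactness for integral Brakke flows to produce a self-shrinking tangent flow, and White's local regularity to get smooth multiplicity-one convergence wherever the Gaussian density of the limit equals one. That reduction to ``rule out density $\geq 2$ points'' is exactly right and matches how the surrounding literature frames the problem. Your third paragraph then correctly identifies that excluding collapsing multi-sheeted limits of embedded surfaces is precisely the content of the conjecture, that neither the slab-pinching heuristic nor the shrinker-classification route is known to close in full generality, and that this is why the authors prove Theorem \ref{3dreg} under the quantitative density bound (\ref{below2}) instead of deriving it from Conjecture \ref{MultOne}. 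In short: there is no gap in your reasoning, because you did not claim a proof; you correctly diagnosed that the key step is open, which is exactly the paper's position as well.
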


Theorem \ref{3dcase} assumes that the Conjecture above holds and its proof is given below.

\begin{proof}[Proof of Theorem \ref{3dcase}]
In this proof, $n=2$. Without loss of generality, assume that $M^{n}\subset B_{1}(0) \subset \RR^{n+1}$. Let $y_{0}\in \RR^{n+1}$ be a point 
reached by the mean curvature 
flow (\ref{MCF1}) at time $T$, that is, there exists a sequence $(y_{j}, t_{j})$ with $t_{j}\nearrow T$ so that $y_{j}\in M_{t_{j}}$ and 
$y_{j}\rightarrow y_{0}$. We show that $(y_{0}, T)$ is a regular point of (\ref{MCF1}). \\
As in the proof of Theorem \ref{MCbound}, let $t = T + \frac{s}{\lambda^2}$ and
\begin{equation*}
 M^{\lambda}_{s} \equiv M^{(y_{0}, T), \lambda}_{s} = \lambda (M_{T + \frac{s}{\lambda^2}} -y_{0}).
\end{equation*}
Then $(M^{\lambda}_{s})$ is a solution of the mean curvature flow in $B_{\lambda}(0)$ for $s\in [-\lambda^2 T, 0)$. For any set $A\subset \RR^{n +1}$, let 
us define the parabolically rescaled measures at $(y_{0}, T)$:
\begin{equation*}
 \mu_{s}^{\lambda} (A) = \lambda^{-n} \mathcal{H}^{n}\lfloor M_{s}^{\lambda} (\lambda\cdot A).
\end{equation*}
Let $\rho_{y_{0, T}}: \RR^{n+1}\times (-\infty, T) \to \RR$ be the backward heat kernel at $(y_{0}, T)$ as defined in (\ref{BWH}).
Then, a result on weak existence of blow ups of Ilmanen and White (see Lemma 8, page 14 of \cite{I1} and also \cite{WhiteCrelle}) 
says that: there exists a subsequence $\lambda_{j}$ and a limiting 
Brakke flow \cite{Brakke} $\{\nu_{s}\}_{s<0}$ (also known as a {\it tangent flow} ) such that $\mu_{s}^{\lambda_{j}}\rightharpoonup \nu_{s}$ in the sense of 
Radon measures for all $s<0$ and the following
statements hold:\\
(a) (self-similarity) $\nu_{s}(A) = \nu_{s}^{\lambda} (A) \equiv \lambda^{-n}\nu_{\lambda^2 s} (\lambda\cdot A)$, for all 
$s<0$ and for all $\lambda>0$\\
(b) (tangent flow is a self-shrinker) $\nu_{-1}$ satisfies
\begin{equation}
 \overrightarrow{H}(x) + \frac{S(x)^{\perp}\cdot x}{2} =0, ~\nu_{-1}~ \mathrm{a. e.}~ x
\end{equation}
(c) Furthermore, Huisken's integral converges
\begin{equation}
 \int \rho_{0,0} (x, -1) d\nu_{s}(x) = \lim_{t\nearrow T} \int \rho_{y_{0}, T} d\mu_{t}, s<0.
\label{Huiskenconv}
\end{equation}
Equivalently, a subsequence of rescaled solutions  $M_s^{\lambda}$ converges weakly to a limiting 
flow $X_s$ that is called a {\it tangent flow} at $(y_0,T)$. We know $X_s$ is a self shrinker. Ilmanen showed in \cite{I1} that it has to be smooth.  Our 
proof will rely on this fact and the validity of Multiplicity One Conjecture. Let us briefly explain the notations used in $(b)$.\\
For a locally $n$-rectifiable Radon measure $\mu$, we define its $n$-dimensional approximate tangent plane $T_{x}\mu$ (which exists $\mu$-a.e x) by
\begin{equation*}
 T_{x}\mu (A) = \lim_{\lambda \rightarrow 0} \lambda^{-n} \mu (x + \lambda\cdot A).
\end{equation*}
The tangent plane $T_{x}\mu$ is a positive multiple of $\mathcal{H}^{n}\lfloor P$ for some $n$-dimensional plane $P$. Let $S: \RR^{n +1} \longrightarrow G(n +1, n)$
denotes the $\mu-$ measurable function that maps $x$ to the geometric tangent plane, denoted by $P$ above. An important
quantity is the first variation of $\mu$, denoted by $\int div_{S(x)}X(x) d\mu(x)$ for $X\in C^{\infty}_{c}(\RR^{n +1}, \RR^{n +1})$.  Here $div_{S}X
= \sum_{i=1}^{n} D_{e_{i}}X.e_{i}$ where $e_{1}, \cdots, e_{n}$ is any orthonormal basis of $S$. We also denote by $S$
the orthogonal projection onto $S$ and thus $div_{S}X$ can be written as $S: DX$. Now, under suitable assumptions, we can 
define the generalized mean curvature vector $\overrightarrow{H} = \overrightarrow{H}_{\mu} \in L^{1}_{\mathrm{loc}}(\mu)$ of $\mu$ as follows
\begin{equation}
 \int div_{S}X d\mu =\int -\overrightarrow{H}\cdot X d\mu
\label{firstvar}
\end{equation}
for all $X\in C_{c}^{\infty}(\RR^{n +1}, \RR^{n +1})$. Note that when $\mu$ is the surface measure of a smooth $n$-dimensional manifold $M$, the generalized mean curvature vector $\overrightarrow{H}$ of 
$\mu$ is also the classical mean curvature vector of $M$; see Corollary 4. 3 in \cite{Schatzle1}.
From (\ref{firstvar}) and the definition of $\mu_{s}^{\lambda}$, one sees that the mean curvature vector 
$\overrightarrow{H}_{s}^{\lambda}$ of $\mu_{s}^{\lambda}$ 
is $\frac{\overrightarrow{H}_{t}}{\lambda}$ where $\overrightarrow{H}_{t}$ is the mean curvature vector of $M_{t}$ where $t = T + \frac{s}{\lambda^2}$.
The lower semicontinuity of $\int\abs{H} d\mu$ asserts that
\begin{equation*}
 \int\abs{\overrightarrow{H}_{s}} d\nu_{s}\leq \liminf_{\lambda\rightarrow \infty}\int \abs{\overrightarrow{H}_{s}^{\lambda}} d\mu_{s}^{\lambda} \leq 
\limsup_{\lambda\rightarrow \infty} \int \frac{C_{0}}{\lambda} d\mu_{s}^{\lambda} =0.
\end{equation*}
Thus $\overrightarrow{H}_{s} =0$ for all $s<0$. Now, because $X_s$ is smooth for all $s<0$, the weak mean curvature vector $\overrightarrow{H}_s$ coincides with the mean curvature vector in classical sense.  Thus we have a smooth solution $X_s$ that is a self-shrinker with $H = 0$ and therefore by the result in \cite{CM} it has to be a hyperplane. Furthermore
$\nu_{s}$ represents the surface measure of the plane $X_{s}$ with multiplicity one by the validity of the Multiplicity One
Conjecture. Using the convergence of Huisken's integral (\ref{Huiskenconv}), we see that
\begin{equation*}
 \lim_{t\nearrow T} \int \rho_{y_{0}, T} d\mu_{t} =1.
\end{equation*}
 By White's regularity theorem \cite{White}, the second fundamental form $\abs{A}(\cdot, t)$ of $M_{t}$ is bounded as $t\rightarrow T$
and $(y_{0}, T)$ is a regular point. Thus, the 
flow can be extended 
past time $T$.
\end{proof}

We conclude this section by the proof of Theorem \ref{3dreg}, which can be viewed as a local regularity result without a smallness condition.

\begin{proof}[Proof of Theorem \ref{3dreg}]
We will use the same notation as in the proof of Theorem \ref{3dcase}. Note that, when $n=2$, by the fact that $\int H_s^2$ is bounded 
(follows from the Gauss-Bonnet theorem for surfaces) and Allard's Compactness Theorem \cite{S},  each 
Radon measure $\nu_{s}$ is
integer $2$-rectifiable, that is
\begin{equation*}
 d\nu_{s} = \theta_{s}(x)d\mathcal{H}^{2}\lfloor X_{s}
\end{equation*}
where $X_{s}$ is an $\mathcal{H}^{2}$-measurable, $2$-rectifiable set and $\theta_{s}$ is an $\mathcal{H}^{2}\lfloor X_{s}$-integrable, integer valued "multiplicity function". 

Furthermore
the mean curvature vector $\overrightarrow{H}_{s}$ of $\nu_{s}$ satisfies $\overrightarrow{H}_{s}\in L^{\infty}(\nu_{s})$. {\it Here is the only 
place we wish 
to use (\ref{Hbound3d}). The same argument as in the proof of Theorem \ref{3dcase} implies $X_s$ is a plane. }\\
 \h The key point of our proof is the following Constancy Theorem due to Sch\"{a}tzle.
\begin{theorem} (Sch\"{a}tzle's Constancy Theorem)
Let $\mu=\theta \mathcal{H}^{n}\lfloor M$ be an integral $n$-varifold in the open set $\Omega\subset R^{n+m}$, $M\subset \Omega$ a 
connected $C^{1}$-n-manifold, $\theta: M\rightarrow N_{0}$ be $\mathcal{H}^{n}$-measurable with weak 
mean curvature $\overrightarrow{H}_{\mu}\in L_{loc}^{1}(\mu)$, that is
\begin{equation}
 \int \mathrm{div}_{\mu} \eta d \mu = \int_{M} \mathrm{div}_{M}\eta \theta d\mathcal{H}^{n} = -\int <\overrightarrow {H}_{\mu}, \eta> d\mu~~
\forall \eta\in C^{1}_{0}(\Omega, \RR^{n+ m}).
\label{weakMC}
\end{equation}
Then $\theta$ is a constant: $\theta \equiv\theta_{0}\in N_{0}$. Here $N_{0}$ is the set of all nonnegative integers and $<\cdot>$ is the 
standard Euclidean inner product on $\RR^{n + m}.$
\label{Schatzle}
 \end{theorem}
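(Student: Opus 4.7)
The plan is to reduce the first variation identity (\ref{weakMC}) to a weak PDE satisfied by $\theta$ alone, and then apply the elementary fact that an $L^1_{\mathrm{loc}}$ function on a connected manifold with vanishing distributional gradient is $\mathcal{H}^n$-a.e.\ constant. The key observation is that for test vector fields $\eta$ tangent to $M$, the inner product $\langle\overrightarrow{H}_\mu,\eta\rangle$ vanishes, so the right-hand side of (\ref{weakMC}) disappears and only the divergence term remains. Connectedness of $M$ then pins down a single constant, and integer-valuedness forces it to lie in $N_0$.

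To carry this out, I would first invoke Brakke's perpendicularity theorem: for any integral $n$-varifold with $\overrightarrow{H}_\mu\in L^1_{\mathrm{loc}}(\mu)$, one has $\overrightarrow{H}_\mu(x)\in(T_xM)^\perp$ for $\mu$-a.e.\ $x$. Given any compactly supported $C^1$ tangent vector field $X$ on $M\cap\Omega$, I would extend it to an ambient field $\eta\in C^1_0(\Omega,\RR^{n+m})$ with $\eta|_M=X$ using a $C^1$ tubular neighborhood of $M$ and a cutoff. Along $M$ one then has $\mathrm{div}_M\eta=\mathrm{div}_MX$ and $\langle\overrightarrow{H}_\mu,\eta\rangle=0$ $\mu$-a.e., so (\ref{weakMC}) collapses to
$$\int_M\mathrm{div}_MX\cdot\theta\,d\mathcal{H}^n=0\qquad\text{for all }X\in C^1_c(M\cap\Omega,TM).$$
Working in local $C^1$ charts on $M$ and pulling back, this is precisely the statement that the distributional gradient of $\theta$ vanishes in each chart. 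Hence $\theta$ is $\mathcal{H}^n$-a.e.\ equal to a constant on each chart, and the connectedness of $M$ together with a standard chain-of-charts argument forces those constants to agree, giving $\theta\equiv\theta_0$ a.e.; the hypothesis $\theta\colon M\to N_0$ then pins $\theta_0$ down to lie in $N_0$.

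The main obstacle is the perpendicularity $\overrightarrow{H}_\mu\perp T_xM$, which is a nontrivial classical result in geometric measure theory (Brakke) rather than something one computes from scratch, and it is precisely the integer-rectifiable structure of $\mu$ that makes it available. A secondary technical point is producing the tangential extension of $X$ with only $C^1$ regularity of $M$, but this is routine given the tubular neighborhood theorem for $C^1$ submanifolds. Once these two ingredients are in place, the remaining argument is pure distribution theory on a connected manifold.
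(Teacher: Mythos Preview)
Your proposal is correct and follows essentially the same route as the paper's proof: both rely on Brakke's perpendicularity theorem for integral varifolds to kill the right-hand side of (\ref{weakMC}) when testing against tangential fields, reducing to $\int_M (\mathrm{div}_M X)\,\theta\,d\mathcal{H}^n = 0$ for all $X\in C^1_c(M,TM)$, i.e.\ $\nabla_M\theta=0$ weakly, and then concluding by connectedness. The paper additionally carries out an explicit decomposition $\eta=\eta^{tan}+\eta^{\perp}$ and computes $\mathrm{div}_M\eta^{\perp}=-\langle\eta,\overrightarrow{H}_M\rangle$, which yields as a byproduct the identification $\overrightarrow{H}_\mu=\overrightarrow{H}_M$ before specializing to tangential $\eta$; your version bypasses this intermediate identification and goes straight to the tangential test, but the substance is the same.
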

Now $\theta_{s}$ is a constant and $X_{s}$ is a plane. Thus by the convergence of Huisken's integral (\ref{Huiskenconv}), we see that
\begin{equation*}
 \lim_{t\nearrow T} \int \rho_{y_{0}, T} d\mu_{t} = \int \rho_{0,0} (x, -1) d\nu_{s}(x) =  \int \rho_{0,0} (x, -1) \theta_{s} d\mathcal{H}^{2}\lfloor X_{s}
=\theta_{s}.
\end{equation*}
By (\ref{below2}) and Proposition $2.10$ in \cite{White}, $1 \le \theta_{s}<2$. It follows from the integrality of $\theta_{s}$ that $\theta_{s}\equiv 1$. Now, our result follows
from White's local regularity theorem \cite{White}.
\end{proof}
\begin{proof}[Proof of Theorem \ref{Schatzle}] The proof of this theorem can be found in \cite{Le}, Theorem 4.1. We include here for the reader's
convenience. We consider locally $C^{1}$-vector fields $\nu^{1}, \cdots, \nu^{m}$ on $M$, which are an orthonormal basis of 
the orthogonal complement $TM^{\perp}$ of the tangent bundle $TM$ in $T\RR^{n +m}$. For $x\in M$, we 
choose an orthonormal basis $\tau_{1},\cdots, \tau_{n}$ of the tangent space $T_{x}M$ of $M$ at $x$. We decompose 
$\eta\in C^{1}_{0}(\Omega, \RR^{n + m})$ into $
 \eta =\eta^{tan} + \eta^{\perp}, $
where $$\eta^{tan}(x)=\pi_{T_{x}M}(\eta(x))\in T_{x}M,~\h
\eta^{\perp}(x)=\pi_{T_{x}M^{\perp}}(\eta(x))=\sum_{j=1}^{m}<\nu^{j}, \eta(x)>\nu^{j}\in T_{x}M^{\perp}.
 $$
Here, we have denoted $\pi_{V}$ the orthogonal projection operator on the subspace $V$ of $\RR^{n +m}$.
In particular, $
 \eta^{tan}, \eta^{\perp}\in C^{1}_{0}(\Omega).$ Then, we have $
 \mathrm{div}_{M} \eta = \mathrm{div}_{M} \eta^{tan}  + \mathrm{div}_{M} \eta^{\perp}. $
Let $D$ be the standard differentiation operator on $\RR^{n +m}$ 
and $A_{M}$ the second fundamental form of $M$. Denote by $\overrightarrow{H}_{M}$ the weak mean curvature of $M$. Then
\begin{equation*}
\overrightarrow{H}_{M} = \sum_{i=1}^{n} A_{M}(\tau_{i}, \tau_{i}).
\end{equation*}
We have
\begin{eqnarray*}
 \mathrm{div}_{M} \eta^{\perp} &=& \sum_{i=1}^{n}<\tau_{i}, \nabla^{M}_{\tau_{i}}\eta^{\perp}> =\sum_{i=1}^{n}\sum_{j=1}^{m}
<\tau_{i}, D_{\tau_{i}}\left(<\nu^{j}, \eta(x)>\nu^{j}\right)> \\&=&
\sum_{i=1}^{n}\sum_{j=1}^{m}
<\nu^{j},\eta>< \tau_{i}, D_{\tau_{i}}\nu^{j}>
= -<\eta, \sum_{i=1}^{n} A_{M}(\tau_{i}, \tau_{i})>
= -<\eta, \overrightarrow{H}_{M}>.
\end{eqnarray*}
From (\ref{weakMC}), we can calculate
\begin{eqnarray*}
 -\int <\overrightarrow{H}_{\mu},\eta> d\mu = -\int_{M}<\overrightarrow{H}_{\mu},\eta> \theta d\mathcal{H}^{n}
 &=& \int_{M}\mathrm{div}_{M}\eta\theta d\mathcal{H}^{n}\\
&=& \int_{M}\mathrm{div}_{M}\eta^{tan}\theta d\mathcal{H}^{n} +  \int_{M}\mathrm{div}_{M}\eta^{\perp}\theta d\mathcal{H}^{n}\\
&=&  \int_{M}\mathrm{div}_{M}\eta^{tan}\theta d\mathcal{H}^{n} - \int_{M}<\overrightarrow{H}_{M},\eta> \theta d\mathcal{H}^{n}.
\end{eqnarray*}
Let us make some special choices of $\eta$. First, for $\eta =\eta^{\perp} \in TM^{\perp}$, we conclude 
that the projection $\overrightarrow{H}^{\perp}_{\mu}$ of $\overrightarrow{H}_{\mu} $ on 
$TM^{\perp}$ satisfies $\overrightarrow{H}^{\perp}_{\mu} = \overrightarrow{H}_{M}$. Since $\mu$ is integral, 
we get $\overrightarrow{H}_{\mu} \bot T\mu = TM$ by Theorem 5. 8 in Brakke \cite{Brakke} and conclude $
 \overrightarrow{H}_{\mu} = \overrightarrow{H}_{M}.$
Finally, if we choose $\eta$ such that $\eta = \eta^{tan}\in TM$ then
\begin{equation*}
 \int_{M}\mathrm{div}_{M}\eta^{tan}\theta d\mathcal{H}^{n} =0.
\end{equation*}
Calculating in local coordinates, this yields $\nabla_{M}\theta =0$ weakly. Hence $\theta \equiv \theta_{0}$ is constant, as $M$ 
is connected.
\end{proof}

\section{Some global results on the extension of (\ref{MCF1})}
\label{sec-global}

In this section we give  global conditions for extending a smooth solution to (\ref{MCF1}), which has been a subject of study in \cite{LS}.

\begin{proof}[Proof of Theorem \ref{SFbound}]
We argue by contradiction. Suppose that $T$ is the extinction time of the flow. Then, 
by Theorem \ref{Aunbound}, $\abs{A}$ is unbounded. Therefore, there exists a sequence of points $(x_{i}, t_{i})$ with $x_{i}\in M_{t_{i}}$ 
such that
\begin{equation}
 Q_{i}: = \abs{A}(x_{i}, t_{i}) = \max_{0\leq t\leq t_{i}} \max_{x\in M_{t}} \abs{A}(x, t) \to +\infty.
\label{maxMC}
\end{equation}
Consider the sequence $\tilde{M}^{i}_{t}$ of rescaled solutions for $t\in [0,1]$ defined by
\begin{equation*}
 \tilde{F}_{i} (\cdot,t) = Q_{i} (F(\cdot, t_{i} + \frac{t-1}{Q^2_{i}}) - x_i).
\end{equation*}
The sequence of rescaled solutions $\tilde{M}_t^i$ converges (see \cite{ChenHe}) to a complete smooth solution to the mean curvature flow, call 
it $\tilde{M}_t$ for $t\in [0,1]$ with the property that 
\begin{equation}
\label{eq-nonzero}
|\tilde{A}|(0,1) = 1.
\end{equation}
If $g$ and $A:= \{h_{jk}\}$ are the induced metric, the mean curvature and the second fundamental form of $M_{t}$, respectively, then
the corresponding rescaled quantities are given by
\begin{equation*}
 \tilde{g}_{i} = Q^2_{i} g;~ \abs{\tilde{A}_{i}}^2 =\frac{\abs{A}^2}{Q^2_{i}}.
\end{equation*}
We calculate
\begin{multline}
\lim_{i\rightarrow\infty } \left\{ \int^{1}_{0}\left(\int_{(\tilde{M_{i}})_{t}\cap B(0,1)}\abs{\tilde{A_{i}}}^{q} d\mu\right)^{\frac{p}{q}} dt\right\}^{\frac{1}{p}} \\=\lim_{i\rightarrow\infty }  \left\{\int^{t_{i}}_{t_{i}-\frac{1}{Q^2_{i}}}\left(\int_{M_{t}\cap B(0,\frac{1}{Q_{i}})}\abs{A}^{q} d\mu\right)^{\frac{p}{q}} dt\right\}^{\frac{1}{p}}
 \leq \lim_{i\rightarrow\infty }  \left\{\int^{t_{i}}_{t_{i}-\frac{1}{Q^2_{i}}}
\left(\int_{M_{t}}\abs{A}^{q} d\mu\right)^{\frac{p}{q}} dt\right\}^{\frac{1}{p}}
= 0.
\label{vanishA}
\end{multline}
The last step follows from the facts that
\begin{equation*}
  \left(\int_{0}^{T}\left(\int_{M_{t}}\abs{A}^{q} d\mu\right)^{p/q} dt\right)^{1/p}< \infty; \,\,\,\, \lim_{i\rightarrow \infty} \frac{1}{Q^2_{i}} =0.
\end{equation*}
By Fatou's lemma and (\ref{vanishA}) it follows that
$$\int_0^1\left(\int_{\tilde{M}_t\cap B(0,1)} |\tilde{A}|^q\right)^{\frac{p}{q}} = 0.$$
By the smoothness of $\tilde{M}_t$, this implies $|\tilde{A}|(x,t) \equiv  0$ for all $x\in \tilde{M}_t\cap B(0,1)$ and all $t\in [0,1]$.
This contradicts (\ref{eq-nonzero}).
\end{proof}

\section{Some local regularity results and applications}
\label{sec-local}
   
\subsection{$\e$-regularity theorem for the mean curvature flow}
\label{sec-ep}

In this section we prove Theorem \ref{CS}, which is  parabolic version of the epsilon regularity theorem for minimal surfaces proven by Choi and  Schoen \cite{CS}. A version of the epsilon regularity theorem for mean curvature flow has been obtained by Ecker in \cite{EckerCalvar}. He required smallness of the supremum over small time intervals of spatial $L^n$ norms of $|A|$ over small balls. 

\begin{proof}[Proof of Theorem \ref{CS}]
We may assume without loss of generality that the flow $\mathcal{M} = (M_{t})_{t<t_{0}}$ is smooth up to and including time $t_{0}$, because we can
first prove the theorem with $t_{0}$ replaced by $t_{0}-\alpha$ for fixed $\alpha>0$ and then (since the right hand side of our desired
inequality is independent of $\alpha>0$) let $\alpha \searrow 0$ afterwards. \\
\h Let 
\begin{equation*}
 F(\delta) = \sup_{t\in [t_{0}-(\sigma-\delta)^2,t_{0}]} \sup_{x\in B(x_{0}, \sigma-\delta)\cap M_{t}} 
\delta^2 \abs{A}^2 (x,t). 
\end{equation*}
Since our flow is smooth up to time $t_{0}$, $F(0) =0$. Thus, there exists $\delta_{\ast}\in (0, \sigma/2]$ such that
$F(\delta_{\ast}) =\max_{0\leq \delta\leq \sigma/2} F(\delta)$. It suffices to show that
\begin{equation}
 F(\delta_{\ast})< \e_{0}^{\frac{-2}{n +2}} 
(\int_{t_{0}-\sigma^2}^{t_{0}} \int_{M_{t}\cap B(x_{0},\sigma)}\abs{A}^{n +2} d\mu dt)^{\frac{2}{n +2}} \equiv (\e_{0}^{-1}\eta)^{\frac{2}{n +2}}.
\end{equation}
Suppose not, then 
\begin{equation}F(\delta_{\ast}) \geq (\e_{0}^{-1}\eta)^{\frac{2}{n +2}}.
 \label{Fbig}
\end{equation}
 Because the flow is defined up to and including time $t_{0}$, we
can find $t_{\ast}\in [t_{0}-(\sigma-\delta_{\ast})^2, t_{0}]$ and $x_{\ast}\in \overline{B}(x_{0}, \sigma-\delta_{\ast})\cap M_{t_{0}}$ such that
\begin{equation}
 \delta_{\ast}^{2} \abs{A}^{2}(x_{\ast}, t_{\ast}) = F(\delta_{\ast}).
\label{goodradius}
\end{equation}
It follows from $\delta_{\ast}\in (0, \sigma/2]$ that
\begin{equation}
 B(x_{\ast}, \frac{\delta_{\ast}}{2})\subset B(x_{0}, \sigma-\frac{\delta_{\ast}}{2});~
 [t_{\ast} -\frac{\delta_{\ast}^2}{4}, t_{\ast}] \subset [t_{0}- (\sigma-\frac{\delta_{\ast}}{2})^2, t_{0}].
\label{shrinking}
\end{equation}
By the choice of $\delta_{\ast}, t_{\ast}$ and $x_{\ast}$,
\begin{equation*}
 (\frac{\delta_{\ast}}{2})^2 \sup_{t\in [t_{0}- (\sigma-\frac{\delta_{\ast}}{2})^2, t_{0}]}\sup_{x\in B(x_{0}, \sigma-\frac{\delta_{\ast}}{2})
\cap M_{t}}
\abs{A}^2(x,t) \leq F(\delta_{\ast}) = \delta_{\ast}^2\abs{A}^2(x_{\ast}).
\end{equation*}
Hence
\begin{equation*}
 \sup_{t\in [t_{0}- (\sigma-\frac{\delta_{\ast}}{2})^2, t_{0}]}\sup_{x\in B(x_{0}, \sigma-\frac{\delta_{\ast}}{2})\cap M_{t}}
\abs{A}^2(x,t) \leq 4\abs{A}^2(x_{\ast}, t_{\ast})
\end{equation*}
and thus, it follows from (\ref{shrinking}) that
\begin{equation}
 \sup_{t\in  [t_{\ast} -\frac{\delta_{\ast}^2}{4}, t_{\ast}]}\sup_{x\in  B(x_{\ast}, \frac{\delta_{\ast}}{2})\cap M_{t}}
\abs{A}^2(x,t) \leq 4\abs{A}^2(x_{\ast}, t_{\ast}).
\label{inside}
\end{equation}
We now rescale our mean curvature flow by setting
\begin{equation*}
 \tilde{F}(\cdot, t) = Q F(\cdot, t_{\ast} + \frac{t-1}{Q^2}), ~\tilde{M}_{t} = \tilde{F}(M^{n}, t)
\end{equation*}
where
\begin{equation*}
 Q = 2 (\e_{0}\eta^{-1})^{\frac{1}{n +2}} \abs{A}(x_{\ast}, t_{\ast}).
\end{equation*}
Then we have a mean curvature flow $\tilde{M}_{t}$ on $B(x_{\ast}, Q\delta_{\ast}/2)$ for $t\in [0,1]$.
Let $\tilde{g} = Q^2 g$ be the induced metric on $\tilde{M}_{t}$ and let $\tilde{B}(x_{\ast}, r)$ be the geodesic ball w. r. t. the metric
$\tilde{g}$ and centered at $x_{\ast}$ with 
radius $r$.
By (\ref{Fbig}) and (\ref{goodradius}), we have
\begin{equation}
 \frac{Q\delta_{\ast}}{2}\geq 1.
\end{equation}
This combined with (\ref{inside}) gives
\begin{equation}
 \sup_{t\in [0,1]}\sup_{x\in \tilde{B}(x_{\ast}, 1)\cap \tilde{M}_{t}} \abs{\tilde{A}}^2(x,t) \leq 4 \abs{\tilde{A}}^2(x_{\ast},1)\leq 1.
\label{boundA}
\end{equation}
Note that the last inequality follows from the facts that 
\begin{equation*}
 4\abs{\tilde{A}}^{2}(x_{\ast},1) = \frac{4\abs{A}^2(x_{\ast}, t_{\ast})}{Q^2} = (\e_{0}^{-1}\eta)^{\frac{2}{n +2}} \leq 1.
\end{equation*}
To obtain a contradiction, we will use the inequality
\begin{equation}
 (\partial_{t}-\Delta) \abs{\tilde{A}}^2 \leq 2 \abs{\tilde{A}}^{4}.
\label{Simons}
\end{equation}
This is a differential inequality of the form $ (\partial_{t}-\Delta) v \leq fv$ where $v=\abs{\tilde{A}}^2$ and $f=2\abs{\tilde{A}}^2$. Furthermore
$f$ satisfies a smallness condition:
\begin{equation*}
 \int_{0}^{1}\int_{\tilde{M}_{t}\cap \tilde{B}(x_{\ast}, 1)} f^{\frac{n +2}{2}} d\tilde{\mu} dt \leq 2^{n +2}\eta \leq 2^{n +2} \e_{0}.
\end{equation*}
Thus, we can localize our estimates in Lemmas 5. 1 and 6.1 in \cite{LS} to obtain the following inequality
\begin{equation}
 \abs{\tilde{A}}^2(x_{\ast},1)\leq \sup_{t\in [1/12,1]}\sup_{x\in \tilde{B}(x_{\ast},1/2)} \abs{\tilde{A}}^2(x,t)\leq C(n) 
(\int_{0}^{1}\int_{\tilde{M}_{t}\cap \tilde{B}(x_{\ast}, 1)} \abs{\tilde{A}}^{n +2} d\tilde{\mu} dt)^{\frac{2}{n +2}} \leq C(n) \eta^{\frac{2}{n +2}}.
 \label{keycontra}
\end{equation}
There is a simple proof of this inequality. It goes as follows. 
Note that for $t\in [0,1]$ and $x\in \tilde{B}(x_{\ast},1)$, (\ref{boundA}) and (\ref{Simons}) give
\begin{equation*}
 (\partial_{t}-\Delta) \abs{\tilde{A}}^2 \leq 2 \abs{\tilde{A}}^{2}
\end{equation*}
or equivalently $ (\partial_{t}-\Delta) (e^{-2t}\abs{\tilde{A}}^2)\leq 0.$
Now, we can apply Moser's mean value inequality (\cite{EckerCalvar}, Proposition 1.6) for $e^{-2t}\abs{\tilde{A}}^2$ to obtain a 
constant $C_{1}(n)$ depending only on $n$ such that 
\begin{equation}
 \abs{\tilde{A}}^2(x_{\ast},1) \leq C_{1}(n) \int_{0}^{1}\int_{\tilde{M}_{t}\cap \tilde{B}(x_{\ast}, 1)} \abs{\tilde{A}}^2 d\tilde{\mu} dt.
\end{equation}
By H\"{o}lder inequality
\begin{equation}
  \abs{\tilde{A}}^2(x_{\ast},1) \leq C_{1}(n) (\int_{0}^{1}\int_{\tilde{M}_{t}\cap \tilde{B}(x_{\ast}, 1)} d\tilde{\mu} dt)^{\frac{n}{n +2}}
(\int_{0}^{1}\int_{\tilde{M}_{t}\cap \tilde{B}(x_{\ast}, 1)} \abs{\tilde{A}}^{n +2} d\tilde{\mu} dt)^{\frac{2}{n +2}}.
\end{equation}
By (\ref{boundA}) and the Gauss equation 
\begin{equation*}
 \tilde{R}_{ik} = \tilde{H}\tilde{h}_{ik} - \tilde{h}_{il}\tilde{g}^{lj}\tilde{h}_{jk}
\end{equation*}
one easily sees that the Ricci tensor satisfies $\tilde{R}_{ik}\geq - (n-1).$ By the Bishop-Gromov volume comparison theorem, for each time $t\in [0,1]$, one has
$\int_{\tilde{M}_{t}\cap \tilde{B}(x_{\ast},1)}d\tilde{\mu}
\leq V(n)$ where $V(n)$ denotes the volume of a unit geodesic ball in an n-dimensional space form of 
constant curvature $-1$. Thus
\begin{multline}
 \abs{\tilde{A}}^{2}(x_{\ast},1)\leq C_{1}(n)V(n)^{\frac{n}{n +2}}(\int_{0}^{1}\int_{\tilde{M}_{t}\cap \tilde{B}(x_{\ast}, 1)} \abs{\tilde{A}}^{n +2}
 d\tilde{\mu} dt
)^{\frac{2}{n +2}} \\= C_{1}(n)V(n)^{\frac{n}{n +2}}(\int_{t_{\ast}-\frac{1}{Q^2}}^{t_{\ast}}\int_{M_{t}\cap B(x_{\ast}, \frac{1}{Q})}
\abs{A}^{n +2} d\mu dt)^{\frac{2}{n +2}}\\
\leq C_{1}(n)V(n)^{\frac{n}{n +2}} (\int_{t_{\ast}-\delta_{\ast}^2/2}^{t_{\ast}}\int_{M_{t}\cap B(x_{\ast}, 
\delta_{\ast}/2)}\abs{A}^{n +2} d\mu dt)^{\frac{2}{n +2}}\\
\leq C_{1}(n)V(n)^{\frac{n}{n +2}}
 (\int_{t_{0}-\sigma^2}^{t_{0}} \int_{M_{t}\cap B(x_{0},\sigma)}\abs{A}^{n +2} d\mu dt)^{\frac{2}{n +2}} = C_{1}(n)V(n)^{\frac{n}{n +2}} 
\mu^{\frac{2}{n +2}}.
\end{multline}
Consequently,
\begin{equation*}
 C_{1}(n)V(n)^{\frac{n}{n +2}}(\mu)^{\frac{2}{n +2}} \geq  \abs{\tilde{A}}^{2}(x_{\ast},1) = \frac{1}{4} (\e_{0}^{-1}\mu)^{\frac{2}{n +2}}.
\end{equation*}
This is a contradiction if $\e_{0}$ is small.
\end{proof}
\begin{remark}
\label{lemmapq}
In general, we can modify the proof of Theorem \ref{CS} to obtain the following result.
 Suppose $\mathcal{M} = (M_{t})$ is a smooth, properly embedded
solution of the mean curvature flow in $B(x_{0}, \rho)\times (t_{0}-\rho^2, t_{0})$ which reaches $x_{0}$ at time $t_{0}$. Let $p$ and $q$
be positive numbers satisfying
\begin{equation*}
 \frac{n}{q} + \frac{2}{p} =1.
\end{equation*}
Then, there exists $\e_{0} = \e_{0}(M_{0}, p, q)>0$ such that if $0<\sigma\leq \rho$ and
\begin{equation}
 \int_{t_{0}-\sigma^2}^{t_{0}} \left(\int_{M_{t}\cap B(x_{0},\sigma)}\abs{A}^{q} d\mu\right)^{p/q} dt <\e_{0}
\end{equation}
then
\begin{equation}
\label{eq-pq}
 \max_{0\leq\delta\leq \sigma/2} \sup_{t\in [t_{0}-(\sigma-\delta)^2,t_{0})} \sup_{x\in B(x_{0}, \sigma-\delta)\cap M_{t}} 
\delta^2 \abs{A}^2 (x,t) < \e_{0}^{\frac{-2}{p}} 
(\int_{t_{0}-\sigma^2}^{t_{0}}\left( \int_{M_{t}\cap B(x_{0},\sigma)}\abs{A}^{q} d\mu\right)^{p/q} dt)^{\frac{2}{p}}.
\end{equation}
\end{remark}
Theorem \ref{CS} can be extended to the case when an  ambient manifold is an arbitrary Riemannian manifold.

\begin{cor}
\label{cor-ext}
Let $n \ge 2$ and $N^{n+1}$ be a smooth complete, locally symmetric Riemannian manifold with bounded geometry. Let $M_0$ be a 
compact connected hypersurface without boundary which is smoothly immersed in $B(x_0,\rho) \subset N$. 
Suppose that $\mathcal{M} = (M_{t})$ is a smooth, properly embedded
solution of the mean curvature flow in $B(x_{0}, \rho)\times (t_{0}-\rho^2, t_{0})$ which reaches $x_{0}$ at time $t_{0}$. There exists $\epsilon_0 = \epsilon_0(M_0, N)$ such that if  $0<\sigma\leq \rho$ and
\begin{equation}
 \int_{t_{0}-\sigma^2}^{t_{0}} \int_{M_{t}\cap B(x_{0},\sigma)}\abs{A}^{n +2} d\mu dt <\e_{0}
\end{equation}
then
\begin{equation}
 \max_{0\leq\delta\leq \sigma/2} \sup_{t\in [t_{0}-(\sigma-\delta)^2,t_{0})} \sup_{x\in B(x_{0}, \sigma-\delta)\cap M_{t}} 
\delta^2 \abs{A}^2 (x,t) < \e_{0}^{\frac{-2}{n +2}} 
(\int_{t_{0}-\sigma^2}^{t_{0}} \int_{M_{t}\cap B(x_{0},\sigma)}\abs{A}^{n +2} d\mu dt)^{\frac{2}{n +2}}.
\end{equation}
\end{cor}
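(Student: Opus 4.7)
The plan is to mimic the proof of Theorem \ref{CS} and reduce, after parabolic rescaling, to a situation which is essentially Euclidean. The key observation is that when we rescale the ambient metric $g^{N}$ by a factor $Q^{2}$ with $Q$ large, the resulting ambient geometry is $C^{k}$-close to flat on unit balls, uniformly in the base point, by virtue of the bounded geometry assumption on $N$; in particular the ambient sectional curvatures rescale as $Q^{-2}\|R^{N}\|_{\infty}$. Local symmetry of $N$ additionally ensures $\nabla R^{N}\equiv 0$, which simplifies the Simons-type evolution inequality.

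First I would carry out the point-picking argument exactly as in Theorem \ref{CS}: assume the desired bound fails, introduce the function $F(\delta)$, and select $\delta_{*}, t_{*}, x_{*}$ realising its maximum. Then define the rescaled flow by $\tilde{F}(\cdot, t) = Q\,\exp_{x_{*}}^{-1}\bigl(F(\cdot, t_{*} + (t-1)/Q^{2})\bigr)$, with $Q = 2(\varepsilon_{0}\eta^{-1})^{1/(n+2)}|A|(x_{*}, t_{*})$, viewed inside the tangent space $T_{x_{*}}N$ equipped with the pulled-back rescaled metric $Q^{2}\exp_{x_{*}}^{*}g^{N}$. On geodesic balls of unit radius this ambient metric is, by bounded geometry, close to the flat metric, and the analogues of (\ref{boundA}) and (\ref{shrinking}) transplant verbatim with the same numerical constants.

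The analytic input is the Riemannian Simons identity, which gives
\begin{equation*}
(\partial_{t}-\Delta)|A|^{2} \leq 2|A|^{4} + C_{1}(N)|A|^{2} + C_{2}(N),
\end{equation*}
where $C_{1}, C_{2}$ depend only on a pointwise bound for $R^{N}$, since local symmetry eliminates the $\nabla R^{N}$ contributions. Under the rescaling by $Q$ this becomes
\begin{equation*}
(\partial_{t}-\tilde{\Delta})|\tilde{A}|^{2} \leq 2|\tilde{A}|^{4} + Q^{-2}C_{1}|\tilde{A}|^{2} + Q^{-4}C_{2},
\end{equation*}
so for $Q$ large the extra terms are harmless perturbations of (\ref{Simons}). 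Combining with $|\tilde{A}|\leq 1$ on $\tilde{B}(x_{*}, 1)\times [0,1]$ and passing to the gauge $v = e^{-Ct}|\tilde{A}|^{2}$ we obtain a genuine subsolution of the heat equation. Moser's mean value inequality applies on $\tilde{M}_{t}$ because the Gauss equation, the bound $|\tilde{A}|\leq 1$, and the rescaled ambient curvature bound force the intrinsic Ricci to satisfy $\widetilde{\operatorname{Ric}} \geq -n$ for $Q$ large; Bishop-Gromov then gives the uniform volume bound on $\tilde{B}(x_{*}, 1)\cap \tilde{M}_{t}$. Repeating the H\"older-interpolation step of Theorem \ref{CS} leads to
\begin{equation*}
|\tilde{A}|^{2}(x_{*}, 1) \leq C(n, N)\,\eta^{2/(n+2)},
\end{equation*}
which contradicts the equality $|\tilde{A}|^{2}(x_{*}, 1) = \tfrac{1}{4}(\varepsilon_{0}^{-1}\eta)^{2/(n+2)}$ as soon as $\varepsilon_{0} = \varepsilon_{0}(M_{0}, N)$ is chosen small enough.

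The main obstacle is tracking how Moser's mean value inequality, the Sobolev constants on $\tilde{M}_{t}$, and Bishop-Gromov depend on the rescaled ambient data. This is exactly where the two hypotheses on $N$ are used: bounded geometry guarantees that all rescaled metrics $Q^{2}\exp_{x_{*}}^{*}g^{N}$ on balls of unit radius stay in a fixed $C^{k}$-precompact class, so the constants entering Moser's inequality and the volume comparison depend only on $N$ and not on $Q$; and local symmetry keeps the Simons-type evolution inequality free of $\nabla R^{N}$ terms that would otherwise produce mixed-scale errors. Once these uniform constants are in hand, the structural argument of Theorem \ref{CS} transfers with $\varepsilon_{0}$ now depending additionally on $\|R^{N}\|_{C^{0}}$ and the injectivity radius of $N$.
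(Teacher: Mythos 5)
Your proposal follows the paper's own proof essentially step for step: point-picking as in Theorem \ref{CS}, rescale, use the Riemannian evolution equation for $\abs{A}^2$ from \cite{Hu85} together with local symmetry to kill the $\bar\nabla\bar{\rem}$ terms, pass to a heat subsolution via an exponential gauge, apply Moser's mean value inequality with the Ricci lower bound from the Gauss equation and Bishop--Gromov, and close with H\"older interpolation. One small inaccuracy: after dropping the $\bar\nabla\bar{\rem}$ contributions the remaining curvature terms in the evolution identity are all quadratic in $A$, so the differential inequality reads $(\partial_t-\Delta)\abs{A}^2\le 2\abs{A}^4+C_1\abs{A}^2$ with no zeroth-order $C_2$ term; including a $C_2$ as you do makes the gauge $v=e^{-Ct}\abs{\tilde A}^2$ fail to be a subsolution as written (one would instead need to absorb a constant), though this is harmless here precisely because $C_2=0$.
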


\begin{proof}
In the formulas that follow, if we mean the metric or the connection on $N$, this will be indicated by 
a bar, for example $\bar{g}_{\alpha\beta}$, etc. The Riemann curvature 
tensors on $M$ and $N$ will be denoted by $\rem = \{R_{ijkl}\}$ and $\overline{\rem} = \{\bar{R}_{\alpha\beta\gamma\delta}\}$. 
Let $\nu$ be the outer unit normal to $M_{t}$. For a fixed time $t$, we choose a local field of frame
$e_{0}, e_{1}, \cdots, e_{n}$ in $N$ such that when restricted to $M_{t}$, we have
$e_{0} = \nu, e_{i} =\frac{\partial F}{\partial x_{i}}$. The relations between $A = (h_{ij}), \rem$ and $\bar{\rem}$ are given by the equations of Gauss and Codazzi:
\begin{equation}
\label{eq-cod1}
R_{ijkl} = \bar{R}_{ijkl} + h_{ik}h_{jl} - h_{il}h_{jk},
\end{equation}
\begin{equation}
\label{eq-cod2}
\nabla_k h_{ij} - \nabla_j h_{ik} = \bar{R}_{0ijk}.
\end{equation}
Observe that we have the following evolution equation:
\begin{eqnarray*}
\frac{\partial}{\partial t}|A|^2 &=& \Delta |A|^2 - 2|\nabla A|^2 + 2|A|^2(|A|^2 + \overline{\ric}(\nu,\nu)) \nonumber \\
&-& 4(h^{ij}h_j^m\bar{R}_{mli}^l - h_{ij}h^{lm}\bar{R}_{milj}) - 2h^{ij}(\bar{\nabla}_j\bar{R}_{0li}^l + \bar{\nabla}_l\bar{R}_{0ij}^l),
\end{eqnarray*}
whose derivation can be found in \cite{Hu85}.
Since $N$ is, by our assumption,  locally symmetric, we have $\bar{\nabla}\overline{\rem} = 0$ and therefore the previous equation just reads as
\begin{eqnarray}
\label{eq-ev-rim}
\frac{\partial}{\partial t}|A|^2 &=& \Delta |A|^2 - 2|\nabla A|^2 + 2|A|^2(|A|^2 + \overline{\ric}(\nu,\nu)) \\
&-& 4(h^{ij}h_j^m\bar{R}_{mli}^l - h_{ij}h^{lm}\bar{R}_{milj}) \nonumber.
\end{eqnarray}
Using the evolution equation (\ref{eq-ev-rim}) and the bounds on the geometry of $N$  we obtain
$$\frac{\partial}{\partial t}|A|^2 \le \Delta |A|^2 + 2|A|^4 + C|A|^2.$$
After rescaling our solution and using (\ref{boundA}), we obtain
$$(\frac{\partial}{\partial t} - \Delta)|\tilde{A}|^2 \le C|\tilde{A}|^2.$$ 
If $f := e^{-C \cdot t}|\tilde{A}|^2$ then we have
$$(\frac{\partial}{\partial t} - \Delta) f \le 0.$$
If we take a trace of (\ref{eq-cod1}) in $jl$ we obtain
$$R_{ik} = g^{jl}\bar{R}_{ijkl} + h_{ik} H  - h_{il}h_{jk}g^{jl} \ge -C.$$ 
Applying the Moser mean value inequality to $f$ will lead to a contradiction in the same way as in the proof of Theorem \ref{CS}.
\end{proof}

\subsection{Some applications of Theorem \ref{CS}}
\label{sec-apply}
In this section, we give three applications of the local regularity results obtained in section \ref{sec-ep}. \\
\h The first application is a simple consequence of the Remark \ref{lemmapq}.  It gives a sufficient integral condition for  (\ref{MCF1}) to 
have a type I singularity. This will be achieved by showing that any type-I 
control on the $L^{s}$-norm ($s>n$) of the second fundamental form for all time slice
$t$ gives a type -I control on the second fundamental form. Precisely, we prove the following. 
 \begin{cor}
Let $s\in (n, \infty)$. Suppose there is a constant $C_{s}>0$ such that for any $T/2 \leq t<T$, we have
\begin{equation}
 \norm{A}_{L^{s}(M_{t})} \leq \frac{C_{s}}{(T-t)^{\frac{s-n}{2s}}}.
\label{Lsbound}
\end{equation}
Then (\ref{typeI}) holds. 
\label{improves}
\end{cor}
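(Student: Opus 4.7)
The plan is to apply Remark \ref{lemmapq} with exponents $q=s$ and $p=\frac{2s}{s-n}$. These are finite and positive because $s>n$, and they satisfy $\frac{n}{q}+\frac{2}{p}=1$, so they are admissible in that remark. Let $\e_{0}=\e_{0}(M_{0},p,q)$ be the corresponding constant. For any $t_{0}\in[T/2,T)$ and any $x_{0}\in M_{t_{0}}$, I would choose the parabolic scale $\sigma^{2}=\beta_{0}(T-t_{0})$, where $\beta_{0}\in(0,1)$ depends only on $C_{s}$, $s$, $n$ and $\e_{0}$, to be fixed in the next step.

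With this scale, bounding the localized integral by the global spatial $L^{s}$ norm and using (\ref{Lsbound}),
\begin{equation*}
\int_{t_{0}-\sigma^{2}}^{t_{0}}\Bigl(\int_{M_{t}\cap B(x_{0},\sigma)}|A|^{s}\,d\mu\Bigr)^{p/s}dt\;\le\;\int_{t_{0}-\sigma^{2}}^{t_{0}}\|A\|_{L^{s}(M_{t})}^{p}\,dt\;\le\;C_{s}^{p}\int_{t_{0}-\sigma^{2}}^{t_{0}}\frac{dt}{T-t}\;=\;C_{s}^{p}\log(1+\beta_{0}),
\end{equation*}
the crucial cancellation being $\frac{p(s-n)}{2s}=1$, which turns the power of $(T-t)$ in (\ref{Lsbound}) into the scale-invariant weight $(T-t)^{-1}$. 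Choosing $\beta_{0}$ small enough that $C_{s}^{p}\log(1+\beta_{0})<\e_{0}$ makes the hypothesis of Remark \ref{lemmapq} hold uniformly in $(x_{0},t_{0})$. Taking $\delta=\sigma/2$ in its conclusion yields
\begin{equation*}
\tfrac{\sigma^{2}}{4}|A|^{2}(x,t)\;<\;\e_{0}^{-2/p}\bigl(C_{s}^{p}\log(1+\beta_{0})\bigr)^{2/p}\;\le\;1
\end{equation*}
for every $(x,t)$ with $x\in B(x_{0},\sigma/2)\cap M_{t}$ and $t\in[t_{0}-\sigma^{2}/4,t_{0})$. By smoothness of the flow at $t_{0}<T$, I can let $t\nearrow t_{0}$ along the trajectory $F(p_{0},t)$ of the point $x_{0}=F(p_{0},t_{0})$ and conclude $|A|^{2}(x_{0},t_{0})\le 4/\sigma^{2}=4/[\beta_{0}(T-t_{0})]$. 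Since $x_{0}\in M_{t_{0}}$ is arbitrary, this is (\ref{typeI}) on $[T/2,T)$ with $C_{0}=4/\beta_{0}$; the bound on $[0,T/2]$ is automatic from smoothness of the flow.

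The only delicate point is the exponent identity $\frac{p(s-n)}{2s}=1$: the power of $(T-t)$ in (\ref{Lsbound}) is exactly what is needed so that the localized $L^{p,q}$ norm at the type-I parabolic scale $\sigma^{2}\sim T-t_{0}$ reduces to $C_{s}^{p}\log(1+\beta_{0})$, which can be made arbitrarily small by shrinking $\beta_{0}$. A smaller exponent in (\ref{Lsbound}) would make that integral blow up at this scale and the method would fail, while a larger exponent would remove the logarithm and even sharpen the conclusion. The admissibility conditions $\sigma\le\rho$ and $t_{0}-\sigma^{2}\ge 0$ of Remark \ref{lemmapq} hold automatically since $\beta_{0}<1$ and $t_{0}\ge T/2$.
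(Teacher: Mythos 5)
Your argument matches the paper's proof of Corollary \ref{improves} almost line for line: the same choice $q=s$, $p=2s/(s-n)$ in Remark \ref{lemmapq}, the same key cancellation $\frac{p(s-n)}{2s}=1$, the same parabolic scale $\sigma^{2}$ proportional to $T-t_{0}$, and the same use of $\delta=\sigma/2$ to extract the type-I bound. The only differences are cosmetic: you compute the time integral exactly as $\log(1+\beta_{0})$ where the paper bounds it by $\alpha$ via $\log(1+\alpha)\le\alpha$, and you spell out the harmless limit $t\nearrow t_{0}$ to get the bound at the terminal time slice, which the paper leaves implicit.
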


\begin{remark}
We say that the $L^s$-control on the second fundamental form given by (\ref{Lsbound}) is of type I. Notice that for the shrinking spheres $S^n$ we have the equality in (\ref{Lsbound}).
\end{remark}
\begin{proof}[Proof of Corollary \ref{improves}]
For $q = s > n$ there exists a positive number $p$ such that 
\begin{equation*}
 \frac{n}{s} + \frac{2}{p} =1.
\end{equation*}
Let $(x_{0}, t_{0})$ be arbitrary, where $0<t_{0}<T$.  Let $\sigma \in (0, \sqrt{t_{0}})$. Then, for any $t\in (t_{0}-\sigma^2, t_{0})$ we have 
\begin{equation}
 \int_{t_{0}-\sigma^2}^{t_{0}}\left(\int_{B(x_{0}, \sigma)\cap M_{t}}\abs{A}^{s} \right)^{\frac{2}{s-n}} \leq  \int_{t_{0}-\sigma^2}^{t_{0}} \frac{dt}{T-t} \le \frac{C\sigma^2}{T - t_0} =: C\alpha,
\end{equation}
where $\alpha(T - t_0) = \sigma^2$.
Fix $\alpha$ sufficiently small so that $ C \alpha \leq \e_{0}(M_{0}, p, q)$ where $\e_{0}(M_{0}, p, q)$ is the small constant in Remark \ref{lemmapq}. For this choice of $\alpha$,
the estimate (\ref{eq-pq}), taking $\delta = \frac{\sigma}{2}$ gives
\begin{equation*}
 \abs{A}^2 (x_{0}, t_{0}) \leq \frac{4}{\sigma^2} = \frac{C}{T-t_{0}},
\end{equation*}
and this completes the proof of our corollary.
\end{proof}
The second application is a lower bound on the $L^{s}$-norm ($s>n$) of the second fundamental form at each time slice. This lower bound can be viewed
as a slight generalization of Husiken's estimate \cite{Huisken90} where the case $s=\infty$ was considered. Let $s\in (n, \infty)$. Suppose that $T$ is the first singular time of the mean curvature flow. 
We are interested in the following question: \\
\h Does there exist a constant $C^{'}_{s}>0$ such that for any $t<T$, we have
\begin{equation}
 \norm{A}_{L^{s}(M_{t})} \geq \frac{C^{'}_{s}}{(T-t)^{\frac{s-n}{2s}}}?
\label{HuiskenLs}
\end{equation}
We prove a weaker version of the above inequality as follows
\begin{cor}
 For $t<T$, let $
 f(t) =\sup_{t_{1}\leq t} \norm{A}_{L^{s}(M_{t_{1}})}. $
Then there exists a constant $C^{'}_{s}>0$ such that
\begin{equation}
f(t) \geq \frac{C^{'}_{s}}{(T-t)^{\frac{s-n}{2s}}}
\end{equation}

\label{Lslemma}
\end{cor}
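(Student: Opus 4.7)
The plan is to combine Huisken's lower bound on the blow-up rate of $\max_{M_t}|A|^2$ with the local $\varepsilon$-regularity of Remark \ref{lemmapq}, applied at a point of maximum curvature, in a direct contradiction argument. The intuition is that, at such a point, $|A|^2$ is bounded below of order $(T-t)^{-1}$, and the $\varepsilon$-regularity of Remark \ref{lemmapq} (read backwards) converts this pointwise lower bound into a lower bound on a local space-time integral of $|A|^s$, which by the monotonicity of $f$ controls $f(t_0)$ from below.

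Concretely, fix $t_0\in(T/2,T)$ and pick $x_0\in M_{t_0}$ that realizes $\max_{M_{t_0}}|A|$. From the standard evolution inequality $\partial_t|A|^2\le \Delta|A|^2+2|A|^4$, together with the fact that $\max_{M_t}|A|^2\to\infty$ as $t\nearrow T$, the parabolic maximum principle and an ODE comparison give Huisken's lower bound $|A|^2(x_0,t_0)\ge 1/(2(T-t_0))$. Take $q=s$ and $p=2s/(s-n)$, so that $n/q+2/p=1$, set $\sigma:=\sqrt{T-t_0}$ (which is small for $t_0$ near $T$, ensuring the hypotheses of Remark \ref{lemmapq} are met), and consider
\[ I := \int_{t_0-\sigma^2}^{t_0}\Bigl(\int_{M_t\cap B(x_0,\sigma)}|A|^s\,d\mu\Bigr)^{p/s}dt. \]
By the monotonicity of $f$ and the inclusion $M_t\cap B(x_0,\sigma)\subset M_t$, one has $I\le f(t_0)^p(T-t_0)$.

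I then argue by contradiction: suppose $f(t_0)^p(T-t_0)<\varepsilon_0\cdot 8^{-p/2}$. Then $I<\varepsilon_0$, so Remark \ref{lemmapq} (with $\delta=\sigma/2$) applies and yields
\[ (\sigma/2)^2|A|^2(x_0,t_0)<\varepsilon_0^{-2/p}I^{2/p}<\varepsilon_0^{-2/p}(\varepsilon_0\cdot 8^{-p/2})^{2/p}=1/8. \]
Substituting $\sigma^2=T-t_0$ gives $|A|^2(x_0,t_0)<1/(2(T-t_0))$, contradicting Huisken's bound. Hence $f(t_0)^p(T-t_0)\ge\varepsilon_0\cdot 8^{-p/2}$, which rearranges to $f(t_0)\ge C'_s(T-t_0)^{-(s-n)/(2s)}$ with $C'_s:=(\varepsilon_0\cdot 8^{-p/2})^{1/p}$. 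For $t_0\in[0,T/2]$ the assertion is trivial after shrinking $C'_s$, since $f(t_0)\ge\|A\|_{L^s(M_0)}>0$ (a closed hypersurface in $\mathbb{R}^{n+1}$ cannot be totally geodesic). The only delicate point is verifying the smoothness hypothesis of Remark \ref{lemmapq} on $B(x_0,\rho)\times(t_0-\rho^2,t_0)$ with $\rho=\sigma$; this is immediate because $t_0<T$ is strictly before the first singular time and $\sigma$ is small, so I do not foresee any serious obstacle.
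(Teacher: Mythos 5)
Your proof is correct and uses the same two ingredients as the paper in the same roles: Remark \ref{lemmapq} with $q=s$, $p=2s/(s-n)$ (bounding the local space–time $L^{p,q}$ integral of $|A|$ by $\sigma^2 f(t_0)^p$ via the monotonicity of $f$), combined with Huisken's lower bound $\max_{M_{t_0}}|A|^2\gtrsim (T-t_0)^{-1}$. The only difference is cosmetic: the paper fixes $\sigma$ by $\sigma^2 f(t_0)^p=\e_0$ and reads off the curvature bound directly, whereas you fix $\sigma^2=T-t_0$ and run the same comparison as a contradiction argument; both yield the same constant up to normalization.
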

\begin{proof}[Proof of Corollary \ref{Lslemma}] 
For $q = s > n$ there exists a positive number $p$ such that 
\begin{equation*}
 \frac{n}{s} + \frac{2}{p} =1.
\end{equation*}
Let $(x_{0}, t_{0})$ be arbitrary, where $0<t_{0}<T$.  Let $\sigma \in (0, \sqrt{t_{0}})$. Then, for any $t\in (t_{0}-\sigma^2, t_{0})$ we have 
\begin{equation}
 \int_{t_{0}-\sigma^2}^{t_{0}}\left(\int_{B(x_{0}, \sigma)\cap M_{t}}\abs{A}^{s} \right)^{\frac{2}{s-n}} \leq 
 \int_{t_{0}-\sigma^2}^{t_{0}} (f(t_{0}))^{\frac{2s}{s-n}} =\sigma^2 (f(t_{0}))^{\frac{2s}{s-n}}.
\end{equation}
Fix $\sigma$ so that $ \sigma^2 (f(t_{0}))^{\frac{2s}{s-n}} = \e_{0}(M_{0}, p, q)$ where $\e_{0}(M_{0}, p, q)$ is the small constant in 
Remark \ref{lemmapq}. For this choice of $\alpha$,
the estimate (\ref{eq-pq}), taking $\delta = \frac{\sigma}{2}$ gives
\begin{equation*}
 \abs{A}^2 (x_{0}, t_{0}) \leq \frac{4}{\sigma^2} = \frac{4(f(t_{0}))^{\frac{2s}{s-n}}}{\sigma^2 (f(t_{0}))^{\frac{2s}{s-n}}} =
\frac{4 (f(t_{0}))^{\frac{2s}{s-n}}}{\e_{0}(p, q)}.
\end{equation*}
Taking the supremum of the left hand side with respect to $x_{0}$ and in view of Huisken's estimate \cite{Huisken90} on the lower bound
of $\sup_{M_{t_{0}}}\abs{A}^2$, we get
\begin{equation*}
 \frac{2}{T-t_{0}}\leq \sup_{M_{t_{0}}}\abs{A}^2 \leq \frac{4 (f(t_{0}))^{\frac{2s}{s-n}}}{\e_{0}(p, q)}.
\end{equation*}
This gives the desired inequality. 
\end{proof}
\h The third application is a regularity result without a smallness condition. We will 
use the curvature estimate in Theorem \ref{CS} to obtain other curvature estimates without any smallness condition for mean curvature flow
of surfaces. Our 
result in this direction states
\begin{cor}
Suppose $\mathcal{M} = (M_{t})$ is a smooth, properly embedded
solution of the mean curvature flow in $B(x_{0}, 4\sigma)\times (t_{0}-(4\sigma)^2, t_{0})\subset \RR^{3}\times (t_{0}-(4\sigma)^2, t_{0})$ 
which reaches $x_{0}$ and time $t_{0}$. 
Given a constant $C_{I}>0$, there is a constant  $C_{P}>0$ so that if
\begin{equation}
\label{eq-four}
 \int_{t_{0}-(4\sigma)^2}^{t_{0}} \int_{B(x_{0}, 4\sigma)\cap M_{t}} \abs{A}^4 d\mu dt \leq C_{I}
\end{equation}
then
\begin{equation}
 \sup_{t\in [t_{0}-\sigma^2, t_{0})}\sup_{x\in B(x_{0}, \sigma)\cap M_{t}} \abs{A}^2 (x,t)\leq C_{P}\sigma^{-2}.
\end{equation}
\label{bigness}
\end{cor}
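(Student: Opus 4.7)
The plan is to argue by contradiction and compactness, exploiting the parabolic scale-invariance of the space-time $L^{n+2}$ norm of $|A|$ (here $n=2$, so $n+2=4$). After rescaling to normalize $\sigma=1$, if no $C_P = C_P(C_I)$ satisfies the conclusion, I would extract a sequence of mean curvature flows $\mathcal{M}^k=(M^k_t)$ on $B(x_0, 4) \times (t_0 - 16, t_0)$ satisfying (\ref{eq-four}) with the same $C_I$ but with $\sup_{B(x_0,1)\cap M^k_t,\, t \in [t_0-1, t_0)} |A_k|^2 \to \infty$.

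Next, mimicking the point-picking used in the proof of Theorem \ref{SFbound}, I would locate $(x^*_k, t^*_k) \in B(x_0, 2) \times [t_0-4, t_0)$ with $Q_k := |A_k|(x^*_k, t^*_k) \to \infty$ together with the local curvature control $|A_k|^2 \le 4 Q_k^2$ on a parabolic cylinder of scale $\sim 1/Q_k$ around $(x^*_k, t^*_k)$. Parabolically rescaling by $Q_k$ and applying Chen--He compactness with interior derivative estimates, I would extract a smooth subsequential limit $\tilde M^\infty$: a smooth eternal MCF on $\mathbb{R}^3 \times (-\infty, 0]$ satisfying $|\tilde A_\infty| \le 2$, $|\tilde A_\infty|(0,0) = 1$, and, by Fatou applied to the scale-invariant $L^4$ integral,
\[
\int_{-\infty}^0 \int_{\tilde M^\infty_s} |\tilde A_\infty|^4\, d\mu\, ds \le C_I < \infty.
\]

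The goal is then to show that this integral finiteness forces $|\tilde A_\infty| \equiv 0$, contradicting $|\tilde A_\infty|(0,0) = 1$. First, the tail of the finite $L^4$ integral gives $\int_{s-1}^s \int_{B(y,1) \cap \tilde M^\infty_\tau} |\tilde A_\infty|^4 \to 0$ as $s \to -\infty$ uniformly in $y$ on bounded sets; Theorem \ref{CS} applied at the cylinders $B(y, 1) \times (s-1, s)$ then yields $|\tilde A_\infty|(y, s) \to 0$ uniformly on compact spatial regions as $s \to -\infty$. Second, this asymptotic flatness combined with Ecker--Huisken area-growth bounds implies $\lim_{s \to -\infty} \int_{\tilde M^\infty_s} \rho_{y, 0}\, d\mu_s = 1$ for every $y \in \mathbb{R}^3$. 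Since Huisken's monotonicity makes $\int \rho_{y, 0}\, d\mu_s$ non-increasing in $s$, and $\Theta(y, 0) \ge 1$ always, we obtain $\Theta(y, 0) = 1$ for every $y$. Equality in Huisken's monotonicity at every center $y$ then forces $\tilde M^\infty$ to be simultaneously a self-shrinker about every $y \in \mathbb{R}^3$; comparing the shrinker equations $H + (z - y)^\perp/(-2s) = 0$ for two distinct centers forces $\tilde M^\infty_s$ to be a static plane, giving the required contradiction.

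The main obstacle is justifying the limit $\int_{\tilde M^\infty_s} \rho_{y,0}\, d\mu_s \to 1$ as $s \to -\infty$: since $\tilde M^\infty$ is a priori non-compact with possibly complicated global geometry, this requires combining the uniform past-infinity flatness coming from Theorem \ref{CS} with quantitative area-growth estimates, so as to ensure that the tangent flow at past infinity is indeed a multiplicity-one plane through $y$.
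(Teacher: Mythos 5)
Your proposal takes a genuinely different route from the paper, but it has a real gap at the step you yourself flag as the ``main obstacle,'' and I don't believe that step can be filled with the tools at hand.

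The paper's proof is a direct, quantitative covering argument with no compactness or blow-up. It first proves (Claim \ref{claim-finite}) a uniform slicewise $L^2$ bound $\sup_{t}\int_{B(x_0,2\sigma)\cap M_t}|A|^2\le C(C_I)$, obtained by multiplying the evolution equation of $|A|^2$ by a cutoff, integrating by parts, and controlling the right-hand side with (\ref{eq-four}), the local area bound from Huisken's monotonicity, and H\"older. Then, given any $(x,t)$ in the interior cylinder, a pigeonhole over $N>C_I/\e$ dyadic annuli $B(x,9^{1-j}\sigma)\setminus B(x,9^{-j}\sigma)$ produces one annulus on which the spacetime $L^4$ integral of $|A|$ is below $\e$; Theorem \ref{CS} then gives a pointwise curvature bound of size $\e_0^{-1/2}\e^{1/2}s^{-2}$ there. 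Finally the small-curvature annular estimate together with the interior $L^2$ bound is fed into the Colding--Minicozzi argument (Lemma 1.10 of \cite{CMbig}) to get the desired pointwise bound inside the ball. Nothing in this is a compactness argument, and in particular no classification of eternal limit flows is needed.

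Your route, by contrast, requires you to classify the eternal limit flow $\tilde M^\infty$ as a static plane, and the crux is showing $\lim_{s\to-\infty}\int_{\tilde M^\infty_s}\rho_{y,0}\,d\mu_s=1$. That is exactly where the argument breaks. Uniform curvature decay at past infinity (which you correctly extract from Theorem \ref{CS}, since the spacetime $L^4$ tail goes to zero) does not preclude the flow from approaching a union of parallel sheets or a plane of multiplicity at least two as $s\to -\infty$, in which case the Gaussian density converges to an integer $\ge 2$, not $1$. Ruling this out is precisely the kind of multiplicity-one statement that the paper is careful to treat as an unresolved conjecture (Conjecture \ref{MultOne}), and your blow-up limit is formed from a \emph{sequence of different flows}, so multiplicity in the limit is, if anything, harder to exclude than in the tangent-flow setting. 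The area-growth bounds you invoke control the Gaussian integral's finiteness but not its value, so they do not close this gap either. Two smaller points: the inequality $\Theta(y,0)\ge 1$ holds only for $y$ on $\tilde M^\infty_0$, so the ``for every $y\in\RR^3$'' assertion must be restricted to points actually on the surface (this can be repaired, since the resulting simultaneous shrinker condition over all of $\tilde M^\infty_0$ still forces a plane); and the point-picking and Chen--He compactness steps need the usual care to ensure the chosen points stay at definite parabolic distance from the lateral boundary, though this is routine. The essential difficulty is the multiplicity-one step, and without a new idea there the contradiction does not materialize. I would recommend abandoning the compactness route and following the quantitative pigeonhole-plus-Colding--Minicozzi strategy, which sidesteps the classification problem entirely.
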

\begin{proof}[Proof of Corollary \ref{bigness}]
We start with the following claim.
\begin{claim}
\label{claim-finite}
Given (\ref{eq-four}) there is a constant $C$ so that
$$\sup_{t\in [t_0-(2\sigma)^2, t_0)}\int_{B(x_0, 2\sigma)\cap M_t} |A| ^2 \, dx \le C.$$
\end{claim}

\begin{proof}
Lets $\eta(x,t)$ be a cut off function compactly supported in $B(x_0,4\sigma)\cap M_t \times [t_0-(4\sigma)^2, t_0)$, identically equal 
to one on $B(x_0,2\sigma)\times [t_0-(3\sigma)^2,t_0)$ (the same one that Ecker used in \cite{EckerCalvar}). Multiply the evolution equation of $|A|^2$
$$\frac{\partial}{\partial t}|A|^2 = \Delta |A|^2 - 2|\nabla A|^2 + 2|A|^4,$$
by $\eta^2$ and integrate it over $M_t$. Using the evolution equation of the volume form $\frac{d}{dt}\mu = -H^2 d\mu$, we see that
\begin{eqnarray}
 \frac{d}{dt}\int_{M_{t}} \abs{A}^2 \eta^2 &\leq& \int_{M_{t}}  \frac{d}{dt} \abs{A}^2 \eta^2 + \abs{A}^2 \frac{d}{dt} \eta^2\nonumber\\ 
& =& \int_{M_{t}}
(\Delta |A|^2 - 2|\nabla A|^2 + 2|A|^4)\eta^2 + \abs{A}^2 \frac{d}{dt} \eta^2\nonumber \\&=&
\int_{M_{t}} \abs{A}^2 2\eta (\frac{d}{dt} - \Delta)\eta + 2\abs{A}^4 \eta^2 + 2\abs{A}^2\eta \Delta \eta + \Delta\abs{A}^2 \eta^2 -2\abs{\nabla A}^2
\eta^2. 
\label{cutoff1}
\end{eqnarray}
Integrating by parts gives
\begin{eqnarray}
\int_{M_{t}} 2\abs{A}^2\eta \Delta \eta + \Delta\abs{A}^2 \eta^2 -2\abs{\nabla A}^2 \eta^2&=& \int_{M_{t}} - 2\nabla (\abs{A}^2\eta)\nabla\eta
-\nabla\abs{A}^2 \nabla \eta^2 -2\abs{\nabla A}^2 \eta^2 \nonumber \\ &=&
\int_{M_{t}} -6 \abs{A}\nabla \abs{A}\eta \nabla\eta - 2\abs{A}^2\abs{\nabla\eta}^2 - 2\abs{\nabla A}^2 \eta^2.
\label{cutoff2}
\end{eqnarray}
Using Kato's inequality $\abs{\nabla \abs{A}}\leq \abs{\nabla A}$ and Cauchy-Schwarz's inequality, one deduces from (\ref{cutoff2}) that
\begin{equation}
 \int_{M_{t}} 2\abs{A}^2\eta \Delta \eta + \Delta\abs{A}^2 \eta^2 -2\abs{\nabla A}^2 \eta^2 \leq \int_{M_{t}} 2\abs{A}^2\abs{\nabla \eta}^2.
\label{cutoff3}
\end{equation}
Combining (\ref{cutoff1}) and (\ref{cutoff3}), we get
\begin{equation*}
 \frac{d}{dt}\int_{M_{t}} \abs{A}^2 \eta^2 \leq \int_{M_{t}}\abs{A}^2 2\eta (\frac{d}{dt} - \Delta)\eta + 2\abs{A}^2\abs{\nabla \eta}^2 + 
2\abs{A}^4 \eta^2.
\end{equation*}
Using that
$$\sup_{M\times [t_0 - 1, t_0]}(\eta^2 + |\nabla\eta|^2 + 2\eta|(\frac{\partial}{\partial t} - \Delta)\eta|) \le \frac{c}{\sigma^2},$$
and that $\vol(B(x_0, 4\sigma)\cap M_t) \le  C\sigma^2$ (this can be proved using Huisken's monotonicity formula \cite{Huisken90}; see for 
example Lemma 1. 4 in \cite{EckerCalvar}) we have
$$\frac{d}{dt}\int_{B(x_0, 4\sigma)}  |A|^2\eta^2 \, d\mu \le \frac{C}{\sigma^2}\int_{B(x_0,4\sigma)\cap M_t} |A|^2\, d\mu + C
\int_{B(x_0,4\sigma)\cap M_t} |A|^4\, d\mu.$$
Choose a cut off function $\psi(t)$ in time so that $\psi = 0$ for $t\in [0, t_0 - (4\sigma)^2]$, $\psi(t) = 1$ for $t\ge t_0 - (2\sigma)^2$ and in between grows linearly. Multiply the previous inequality by $\psi(t)$ and integrate   it over $[t_0 - (4\sigma)^2, t]$, where $t \ge t_0 - (2\sigma)^2$. Then,
$$\int_{B(x_0,2\sigma)\cap M_t} |A|^2\, d\mu \le \frac{C}{\sigma^2}\int_{t_0-(4\sigma)^2}^{t_0}\int_{B(x_0,4\sigma)\cap M_t}|A|^2\, d\mu + \tilde{C}.$$ 
By H\"older inequality and the euclidean volume growth we have
\begin{eqnarray*}
\int_{B(x_0,2\sigma)\cap M_t} |A|^2\, d\mu &\le& \frac{C}{\sigma^2}\left(\int_{t_0-(4\sigma)^2}^{t_0}\int_{B(x_0,4\sigma)\cap M_t} |A|^4\, 
d\mu\right)^{\frac{1}{2}} \cdot\left( \int_{t_0-(4\sigma)^2}^{t_0}\int_{B(x_0,4\sigma)\cap M_t} \, d\mu\right)^{\frac{1}{2}} + \tilde{C} \\
&\le& \frac{C C_I}{\sigma^2}\cdot (16\sigma^2\cdot c\sigma^2)^{\frac{1}{2}} + \tilde{C}\\
&=& \tilde{C},
\end{eqnarray*}
where $\tilde{C}$ is a uniform constant, independent of $\sigma$.
\end{proof}
Having (\ref{eq-four}) and Claim \ref{claim-finite} we can continue as follows.
Let $\e\in (0, \e_{0})$ be a small number to be determined. Here $\e_{0}$ is as in Theorem \ref{CS}. 
Let $N$ be an integer greater than $C_{I}/\e$. Given $x\in B(x_{0}, \sigma)\cap M_{t}$ where 
$t\in [t_{0}-\sigma^2, t_{0})$, there exists $1\leq j\leq N$ with
\begin{equation*}
  \int_{t_{0}-(2\sigma)^2}^{t_{0}} \int_{B(x, 9^{1-j}\sigma)\backslash B(x, 9^{-j}\sigma)\cap M_{t}} \abs{A}^4 d\mu dt \leq C_{I}/N \leq \e \leq \e_{0}.
\end{equation*}
Note that, if $s= 9^{-j}\sigma$ then $B(x, 9s)\subset B(x_{0}, 2\sigma)$. Therefore
\begin{equation}
  \int_{t_{0}-(2\sigma)^2}^{t_{0}} \int_{B(x, 9s)\cap M_{t}} \abs{A}^4 d\mu dt \leq  
\int_{t_{0}-(2\sigma)^2}^{t_{0}} \int_{B(x_{0}, 2\sigma)\cap M_{t}} \abs{A}^4 d\mu dt \leq C_{I}.
\end{equation}
From the estimate
\begin{equation*}
  \int_{t_{0}-(2\sigma)^2}^{t_{0}} \int_{B(x, 9s)\backslash B(x, s)\cap M_{t}} \abs{A}^4 d\mu dt \leq \e\leq \e_{0}
\end{equation*}
we have, by the Choi-Schoen type estimate in Theorem \ref{CS}
\begin{multline}
 \sup_{t\in [t_{0}-(2\sigma-s)^2, t_{0})}\sup_{y\in B(s, 8s)\backslash B(x, 2s)\cap M_{t}} \abs{A}^2 (y,t)\\
\leq \e_{0}^{-1/2} s^{-2}\left( \int_{t_{0}-(2\sigma)^2}^{t_{0}} \int_{B(x, 9s)\backslash B(x, s)\cap M_{t}} \abs{A}^4 d\mu dt\right)^{1/2}
\leq \e_{0}^{-1/2}\e^{1/2} s^{-2}.
\end{multline}
Moreover, inspecting the proof, we can replace extrinsic balls by intrinsic balls $\mathcal{B}(x,s)$. Thus, for each time 
slice $t \in [t_{0}-(2\sigma-s)^2, t_{0}) $, we have the following two estimates
\begin{equation}
 \sup_{y\in \mathcal{B}(s, 8s)\backslash \mathcal{B}(x, 2s)\cap M_{t}} \abs{A}^2 (y,t) \leq \e_{0}^{-1/2}\e^{1/2} s^{-2}
\label{annulus}
\end{equation}
 and
\begin{equation}
 \int_{\mathcal{B}(x, 9s)} \abs{A}^2 (t) d\mu \leq \int_{B(x_{0}, 2\sigma)\cap M_{t}} \abs{A}^2 d\mu\leq C_{I}.
\label{bigball}
\end{equation}
Now, arguing as in the proof of Colding-Minicozzi \cite{CMbig}, Lemma 1.10, one can find a small number $\e$ depending only on $C_{I}$ and $\e_{0}$
such that (\ref{annulus}) and (\ref{bigball}) imply the 
following curvature estimate 
\begin{equation*}
 \sup_{\mathcal{B}(x,s)\subset M_{t}} \abs{A}^2 \leq s^{-2} = (9^{-j}\sigma)^2 \leq 9^{2N}\sigma^{-2}.
\end{equation*}
Hence, for $x\in B(x_{0}, \sigma)\cap M_{t}$ where 
$t\in [t_{0}-\sigma^2, t_{0})$, the following estimate holds
\begin{equation*}
 \abs{A}^{2}(x,t)\leq 9^{2N}\sigma^{-2}.
\end{equation*}
\end{proof}

{} 

\end{document}